\documentclass[11pt]{article}
 
\usepackage[margin=1in]{geometry}
\usepackage{amsmath,amsthm,amssymb}
\usepackage{xcolor}
\usepackage[final]{pdfpages}
\usepackage{algpseudocode}
\usepackage{algorithm}

\usepackage{amsmath}    % need for subequations
\usepackage{amssymb}
\usepackage{graphicx}   % need for figures
\usepackage{verbatim}   % useful for program listings
\usepackage{color}      % use if color is used in text
\usepackage{hyperref}   % use for hypertext links, including those to external documents and URLs
\usepackage{enumerate}
\usepackage{xcolor}
\usepackage{slashed}
\usepackage{amsthm}

\usepackage{mathrsfs}
\usepackage{bbold}
\usepackage{lscape}
\usepackage[american]{babel}
\usepackage{mathtools}
\usepackage{geometry}
\usepackage{mathbbol}
\usepackage{nicefrac}
\usepackage{makecell}
\usepackage{enumerate}
\usepackage{tikz}
\usetikzlibrary{shapes.geometric}
\usetikzlibrary{shapes.misc}

\usepackage[final]{showlabels}

\usepackage{float}
\restylefloat{figure}

\usepackage{etoolbox}

\usepackage{ulem}
 \usepackage{cleveref}
\crefformat{footnote}{#2\footnotemark[#1]#3}

\renewcommand{\title}{On the equivalence of semi-discrete Active Flux and Discontinuous Galerkin methods and a comparison of their performance}

\newcommand{\authorOne}{Wasilij Barsukow\footnote{CNRS, Institut de Mathématiques de Bordeaux (IMB), UMR 5251, 351 Cours de la Libération, 33405 Talence, France, Email: wasilij.barsukow@math.u-bordeaux.fr. \emph{Present address:} Imperial College London, IRL2004, Huxley Building, South Kensington Campus, London, United Kindgom}}
\newcommand{\authorTwo}{Christian Klingenberg\footnote{\label{fn:wue}University of Würzburg, Institute of Mathematics, Emil-Fischer-Straße 40, Würzburg, 97074, Germany}}
\newcommand{\authorThree}{Simon Krotsch\cref{fn:wue}}
\begin{document}

\newcommand{\del}{\partial}
\renewcommand{\theta}{\vartheta}
\renewcommand{\phi}{\varphi}
\newcommand{\vecc}[2]{\left ( \begin{array}{c}#1\\#2\\ \end{array}\right )}
\newcommand{\veccc}[3]{\left ( \begin{array}{c}#1\\#2\\#3\\ \end{array}\right )}
\newcommand{\dd}{\mathrm{d}}
\newcommand{\ee}{\mathrm{e}}
\newcommand{\ii}{\mathbb{i}}
\newcommand{\id}{\mathbb{1}}
\newcommand{\atanh}{\,\text{artanh}\,}
\newcommand{\atan}{\arctan}
\newcommand{\back}{\!\!\!}
\newcommand{\bboxed}[1]{\text{\textsc{Conjecture}: }\boxed{\boxed{#1}}}
\renewcommand{\and}{\wedge}
\newcommand{\primitive}{\text{\textsc{primitive}}}
\newcommand{\lint}{\int\limits}
\renewcommand{\div}{\mathrm{div\,}}
\renewcommand{\vec}{\mathbf}
\newcommand{\todo}[1]{{\color{red}TODO: #1}}
\newcommand{\new}[1]{{\color{blue}#1}}
\newcommand{\symtri}{\protect\tikz \protect\node[isosceles triangle,
draw=black, anchor=south, rotate=135, isosceles triangle apex angle=90,
minimum height=0.12cm, inner sep=0, outer sep=0] at (0, 0) {};}

\definecolor{mygreen}{rgb}{0,0.6,0}
\definecolor{mygray}{rgb}{0.5,0.5,0.5}
\definecolor{mymauve}{rgb}{0.58,0,0.82}

\newcommand{\N}{\mathbb{N}}
\newcommand{\Z}{\mathbb{Z}}
\newcommand{\R}{\mathbb{R}}
 
\newtheorem{theorem}{Theorem}[section]
\newtheorem{definition}[theorem]{Definition}
\newtheorem{lemma}[theorem]{Lemma}
\newtheorem{corollary}[theorem]{Corollary}
\newtheorem{ident}{Identification}

\theoremstyle{remark}
\newtheorem{example}[theorem]{Example}
\newtheorem{remark}[theorem]{Remark}

\AtEndEnvironment{example}{%
  \renewcommand{\qedsymbol}{$\triangleleft$}\qed
}

\begin{center} \Large
\title

\vspace{1cm}

\date{\today}
\normalsize

\authorOne, \authorTwo, \authorThree
\end{center}

\begin{abstract}

% #############################################################################################################################
The Active Flux (AF) method employs a globally continuous approximation, like continuous Finite Element methods. This is achieved through the placement of point values at cell interfaces which are shared between adjacent cells. With, on average, $K+1$ degrees of freedom per cell, Active Flux achieves a polynomial approximation of degree $K+1$, while the Discontinuous Galerkin (DG) method uses only polynomials of degree $K$, i.e. one degree less with the same number of degrees of freedom. Despite all the differences, in this paper we show, however, that for linear problems in one and several dimensions as well as---in some sense---for nonlinear ones, semi-discrete AF and DG are the same method. We identify a mapping between their respective degrees of freedom, upon which the updates of these degrees of freedom turn out to agree. On the one hand, AF therefore seems more economical then DG for a given value of the error, and we confirm this in numerical experiments. On the other hand, this is a way to understand superconvergence of DG in a natural way, and we show how Radau polynomials and their zeros appear in the mapping between DG and AF: In the Radau points, AF ''shines through`` as the background high-order scheme behind DG.
% #############################################################################################################################

Keywords: Superconvergence, Active Flux, Discontinuous Galerkin, Radau polynomials

Mathematics Subject Classification (2010): 65M08, 65M20, 65M60, 76M10, 76M12

\end{abstract}

%\tableofcontents

\section{Introduction}

The numerical error of high-order methods decreases as a high power of the discretization length, at least on smooth solutions. The construction of such methods always amounts to an approximation of the solution by a polynomial of high degree; however, it is possible to distinguish different strategies to construct the polynomial. The most important distinction is associated to whether the stencil is compact. If only one degree of freedom is stored per computational cell, as in Finite Difference (FD) or Finite Volume (FV) methods, then a high-degree polynomial can only be constructed by considering sufficiently many neighbours, increasingly far away. Finite Element (FE) methods by construction consider the numerical solution to be a polynomial in every cell, with varying requirements on what happens at cell interfaces. To this end, they store several degrees of freedom in every cell. The evolution of any degree of freedom in some cell depends at most on degrees of freedom in the cells which are immediate neighbours. Such methods shall be referred to as compact.

Compact methods cannot in general be seen as FD methods on a finer mesh, because first of all, the degrees of freedom might not be associated to point values, and second because even then, and even if the point values are equidistant, their update equations are different. Such a method with, say, $N$ degrees of freedom per cell, could in principle be seen as a Finite Difference method on a finer mesh, if one accepts that there are $N$ different update prescriptions and the value at a FD index $i \in \mathbb Z$ is updated according to prescription $i \!\mod N$. Such an interpretation, however, amounts to no simplification. 

While classical Finite Element methods are usually derived from a variational form of the PDE (Galerkin methods), other ways of evolving a given set of degrees of freedom can be conceived. The Active Flux (AF) method, introduced in \cite{vanleer77,eymann13}, specifies directly what the update equations for its degrees of freedom are. These are, classically, a cell average and a shared point value at each cell interface, augmented by further point values or moments to obtain higher than 3rd order accuracy. The shared point values enforce global continuity of the approximation. 

The original Active Flux method employs so-called evolution operators (e.g. \cite{eymann13,fan15,barsukow18activeflux,kerkmann18,barsukow19activeflux,chudzik24,barsukow25afos}) to specify the new value of the degrees of freedom directly, amounting to a single-stage fully discrete method. Such methods have advantages, e.g. a large domain of stability and computational efficiency due to their single-stage nature, but for sufficiently complicated problems general algorithms are unavailable so far. While for linear advection back-tracking of characteristics allows to compute the new point value efficiently, for multi-d problems so far evolution operators are only available for specific equations. Here, we instead focus on the semi-discrete version of Active Flux (\cite{abgrall20,abgrall22}), where only space derivatives are discretized while time is left continuous according to a method-of-lines approach. In the case of the semi-discrete Active Flux method, the update equation of the average is inspired by FV methods, but taking into account the continuity, while the update of the point values is inspired by FD methods. It has, however, also been shown recently (\cite{barsukow25afasfe}) that the semi-discrete Active Flux method can actually be derived from a variational form by considering a biorthogonal set of test and basis functions. The latter property implies that the (global) mass matrix is just identity. This exemplifies once more that there are different derivations leading to the same method.

We are interested in hyperbolic PDEs, in particular conservation laws. It is well-known that some form of stabilization is required for the numerical method to be amenable to explicit integration in time. This can be done either by directly introducing artificial viscosity in the PDE and thus effectively discretizing a regularized parabolic problem instead (e.g. SUPG, see \cite{brooks82}), or by choosing the discretization to have a directional bias (upwinding), an approach inspired by Riemann solvers for FV methods. One of the most prominent examples of Finite Element methods following the latter approach are Discontinuous Galerkin (DG) methods \cite{lesaint74,cockburn89}. They allow for discontinuities at cell interfaces, and use a Riemann solver to compute the flux. Active Flux includes upwinding only in the update of the point values, while continuity of the approximation allows to retain a central update of averages/moments through an integration by parts. 

Here, we intend to compare AF and DG theoretically and experimentally. At first glance it seems that DG, based on a discontinuous approximation and a variational form, must be very different from AF, which approximates the numerical solution continuously with direct updates of the degrees of freedom (mass-matrix-free). As has been mentioned above, AF can actually be derived from a variational form, though. In this work, we show a much more surprising result---AF and DG are in many cases \emph{exactly the same method}. By this we mean that up to a linear mapping between the degrees of freedom of these methods, a DG method with polynomials of degree $K$ amounts to exactly the same update equations for its degrees of freedom as AF based on polynomials of degree $K+1$. Both methods, however, have some freedom (the choice of numerical flux for DG, the choice of the point value update in AF, etc.), and by equivalence we thus mean that there exists an AF-type method that is equivalent to a given DG method for some given PDE. In this sense, we show equivalence of DG and AF for linear and nonlinear systems in 1-d and for linear advection on Cartesian meshes in 2-d.

In its simplest form, this result appears already in \cite{roe17superconvergence} for linear advection in 1-d, but in our opinion is not accorded sufficient importance, even though it is key to understanding the superconvergence of DG. Our present work can be understood as a generalization of this result. We also show why Radau points/polynomials appear naturally in the mapping between DG and AF. The superconvergence of DG thus finds a new interpretation: at Radau points, the DG approximation agrees with the AF approximation, and the underlying higher-order accuracy of AF can be measured in the DG setting.

High order of accuracy does not tell everything about the properties of the method. In particular, with increasing order the computational effort might also increase, possibly in a prohibitive manner. In the second part of this work, we thus also perform comparisons of computational complexity and cost. Equivalence of the two methods means that they have the same number of degrees of freedom per cell, but certain degrees of freedom might be computationally easier to update than others. Active Flux tends to make more use of point values, whose updates do not require expensive numerical quadratures. In multi-d it is not clear to what level equivalence between AF and DG can be established for non-tensorial bases, of which some have been recently proposed for AF in \cite{lechner25}. Some limited, but encouraging results are available concerning superconvergence of DG in these more complicated situations, though, e.g. \cite{krivodonova03,cao25}. We thus complement our theoretical study by a set of careful measurements of run time of AF and DG as function of order of accuracy, degrees of freedom and numerical error. We give some practical guidance concerning the tradeoff between cost and error.

In \cite{roe18}, which carries a title very similar to that of the present work, only the Active Flux method based on evolution operators was compared to DG. While \cite{roe18} finds that this fully discrete version of Active Flux is more efficient than DG, in the present work we find that a similar statement still holds even for the semi-discrete AF, which employs the method of lines and Runge-Kutta time integrators, just as DG. Most importantly, however, the theoretical aspects of equivalence and superconvergence were not addressed in \cite{roe18}, which was entirely devoted to a practical comparison. We recently have also become aware of \cite{abgrall25}, where the relation between AF and DG is studied, but without sufficient attention to the important role played by the upwinding in the point value update/numerical flux in DG, and without making the link to superconvergence of DG.

The paper is structured as follows: Section \ref{sec:cast} gives an overview of AF, DG and the all-important Radau polynomials. Section \ref{sec:equivalence} presents the equivalence and superconvergence results. Section \ref{sec:perfromance} is dedicated to an analysis of the expected  computational complexity and a comparison to experimental measurements.

A computational cell $C_{i}$ in 1-d is $[x_{i-\frac12}, x_{i+\frac12}]$, with $x_i$ its centroid. We assume $\Delta x := x_{i+\frac12} - x_{i-\frac12}$ to be constant. $P^K(I)$ is the space of univariate polynomials of degree at most $K$ on the interval $I$, which we will omit whenever clear. Certain polynomials carry as a superscript their degree (usually, $K$ or $K+1$); we believe that no confusion with powers is reasonably possible. 
Computational cells in 2-d are $C_{ij} := [x_{i-\frac12}, x_{i+\frac12}] \times [y_{j-\frac12}, y_{j+\frac12}]$, with $\Delta y := y_{j+\frac12} - y_{j-\frac12}$, also considered constant. We restrict ourselves here to tensor-product spaces $P^{K,K} \equiv Q^K := P^K \times P^K$.
Indices never denote derivatives.

\section{Cast} \label{sec:cast}

In one spatial dimension, denote by $V^{K+1}$ and $V_\text{br}^K $ the following spaces:
\begin{itemize}
 \item $\displaystyle  V^{K+1} := \{ v \in C^0, v |_{C_i} \in P^{K+1} \,\, \forall i \in \mathbb Z\} $ is the space of continuous, piecewise polynomial (of degree $K+1$) functions (the natural approximation space of AF) and
 \item $ V_\text{br}^K := \{ v, v |_{C_i} \in P^{K}  \,\, \forall i \in \mathbb Z\} $ is the space of piecewise polynomial functions of degree $K$ (the natural approximation space of DG). 
\end{itemize}
Both have $K$ degrees of freedom per cell. We use the letter $q$ for the exact solution of the conservation law under consideration.
To distinguish approximations and degrees of freedom of DG and AF, we denote the latter in capitals, i.e. the approximations are $q_h \in V_\text{br}^K$ and $Q_h \in V^{K+1}$, where the subscript $h$ here, by tradition, merely indicates the fact of being an approximation, since we prefer the notation $\Delta x, \Delta y$ for the grid cell size otherwise. We denote their restrictions onto cell $C_i$ by $q_i \in P^K\left(\left[-\frac{\Delta x}{2}, \frac{\Delta x}{2}\right]\right)$ and $Q_i \in P^{K+1}\left(\left[-\frac{\Delta x}{2}, \frac{\Delta x}{2}\right]\right)$, respectively. 
In multiple dimensions, we consider tensor-product spaces $\displaystyle  V^{K+1} = \{ v \in C^0, v |_{C_{ij}} \in Q^{K+1} \} $ and $ V_\text{br}^K = \{ v , v |_{C_{ij}} \in Q^{K} \} $ instead, with analogous restrictions 
\begin{align*}
q_{ij}&\in Q^K\left(\left[-\frac{\Delta x}{2}, \frac{\Delta x}{2}\right] \times \left[-\frac{\Delta y}{2}, \frac{\Delta y}{2}\right] \right)&Q_{ij} &\in Q^{K+1}\left(\left[-\frac{\Delta x}{2}, \frac{\Delta x}{2}\right]\times \left[-\frac{\Delta y}{2}, \frac{\Delta y}{2}\right]\right)
\end{align*}

 The dependence of $Q_h$ or $q_h$ on time will frequently be omitted, and we will not adapt the notation for the spaces in case of PDE systems, i.e. when $q$ actually has several components.

\subsection{Active Flux}

By Active Flux we mean in the following the semi-discrete arbitrary-order generalization with additional moments (such as \cite{abgrall22,lechner25}), discussed now in one and several space dimensions separately.

\subsubsection{One spatial dimension}\label{ssec:af1d}

The arbitrary-order version of AF with additional moments appeared first in \cite{abgrall20} (for 1-d). The degrees of freedom are the (shared) point values at cell interfaces $Q_{i+\frac12}(t) \simeq q(t, x_{i+\frac12})$ and the moments
\begin{align}
 Q^{(k)}_i \simeq \frac{A_k}{\Delta x} \int_{x_{i-\frac12}}^{x_{i+\frac12}} b_k(x - x_i) q(t, x) \, \dd x \qquad k = 0, \ldots, K-1
\end{align}
where $b_k$ span $P^{K-1}\left(\left[-\frac{\Delta x}{2}, \frac{\Delta x}{2}\right]\right)$, and $A_k$ are normalizations. A possible choice is $b_k = (\frac{2x}{\Delta x})^k$ and $A_k = k+1$, which ensures that $Q^{(k)}_i = 1$ for $q \equiv 1$ if $k$ even. Given $Q_{i\pm\frac12}$ and $\{ Q_i^{(k)} \}_{k = 0, \ldots, K-1}$, in every cell a polynomial $Q_i$ of degree $K+1$ is uniquely determined. It is called the AF reconstruction or the AF approximation. Tthe interpolation of the shared point values implies global continuity: $Q_i(\frac{\Delta x}{2}) = Q_{i+\frac12} = Q_{i+1}(-\frac{\Delta x}{2})$.

The semi-discrete Active Flux method for the $m \times m$ hyperbolic system of conservation laws
\begin{align}
 \del_t q + \del_x f(q) &= 0 & f &\colon \mathbb R^m \to \mathbb R^m \label{eq:conslaw}
\end{align}
then amounts to the following moment update
\begin{align}
 \frac{\dd}{\dd t} Q^{(k)} - \frac{A_k}{\Delta x} \int_{-\frac{\Delta x }{2}}^{\frac{\Delta x}{2}} b_k'(x) f(Q_i(x)) \,\dd x+ A_k\frac{b_k^+ f(Q_{i+\frac12}) - b_k^- f(Q_{i-\frac12})}{\Delta x}   &=0 \label{eq:af2}
\end{align}
where $b_k^\pm := b_k(\pm \Delta x/2)$. Observe that no Riemann fluxes are necessary.
The integral can be evaluated exactly for linear $f$ or by quadrature.
The point value update can, for example, be
\begin{align}
 \frac{\dd}{\dd t} Q_{i+\frac12} + J^+ \frac{\dd}{\dd x} Q_i\Big|_{x = \frac{\Delta x}{2}} + J^- \frac{\dd}{\dd x} Q_{i+1}\Big|_{x = -\frac{\Delta x}{2}} &= 0 \label{eq:af1}
 \end{align}
but there exist other versions (see e.g. \cite{barsukow24afeuler,duan24}) for nonlinear problems. Here, $J := \nabla_q f$ is the flux Jacobian evaluated in $Q_{i+\frac12}$ and $J^\pm$ are the positive/negative parts of $J$ according to the sign of the eigenvalues.

\subsubsection{Two spatial dimensions}

The classical (third-order) AF method in two space dimensions (\cite{eymann13}) employs point values at nodes and at edge midpoints, as well as a cell average. On Cartesian grids (\cite{barsukow18activeflux,kerkmann18}) this leads to 4 node values, 4 edge values and 1 average accessible to a grid cell, and thus naturally to a locally $Q^{2}$ and globally continuous approximation. The point values are shared and there are 4 degrees of freedom per cell: one average, one node value, one value on midpoints of horizontal edges, and one on vertical ones.
As will be seen below, the interplay with DG will suggest new choices of degrees of freedom, that will be introduced then. 

Several arbitrary-order extensions using additional points along the cell interfaces and additional moments have been suggested in \cite{lechner25}. Here, a tensorial approach will appear upon the comparison to DG. It is, however, inefficient from a practical point of view, because the number of degrees of freedom is (asymptotically) about twice the number required to reach the desired order of accuracy. Naive ways to minimize the number of moments involved can lead to a loss of unisolvence. In \cite{lechner25}, the most economical choice was that of serendipity-type degrees of freedom; the reader is referred to this work for further details. Our experimental study is based on this more efficient approach.
 
\subsection{Radau polynomials}

It is well-known that the zeros of Radau polynomials are important in the context of superconvergence of DG. We briefly review how they appear in the context of numerical quadrature and contrast them with Legendre polynomials. Recall that long division of a polynomial $f\in P^{2K+1}(I)$, $K \geq 0$ allows to write it as $f = p \mathscr L^{K+1} + q$, where $p, q \in P^K(I)$ and $\mathscr L^{K+1}$ is a Legendre polynomial of degree $K+1$ chosen such that
\begin{align}
 \int_I b(x) \mathscr L^{K+1}(x) \, \dd x = 0 \qquad \forall b \in P^K(I) \label{eq:legendredef}
\end{align}
over some interval $I$. Observe that these are $K+1$ equations for $K+2$ coefficients of $\mathscr L^{K+1}$, leaving one free for normalization.
But then,
\begin{align}
  \int_I f(x) \, \dd x =  \int_I q(x) \, \dd x
\end{align}
while the point values of $q$ and $f$ agree in the zeros of $\mathscr L^{K+1}$. This is Gauss-Legendre quadrature. $\mathscr L^{K+1}$ does not vanish at the endpoints of $I$. 

Gauss-Radau quadrature arises upon including one of the endpoints of $I$, say $x^*$. Thus, instead of \eqref{eq:legendredef}, the defining relations for the Radau polynomial $\mathscr R^{K+1} \in P^{K+1}(I)$ are
\begin{subequations}
\begin{align}
 \mathscr R^{K+1}(x^*) &= 0 \\
 \int_I b(x) \mathscr R^{K+1}(x) \, \dd x &= 0 \qquad \forall b \in P^{K-1} \label{eq:radaudefint}
\end{align}\label{eq:radaudef}
\end{subequations}
with $K \geq 1$ this time.
Observe that these are again $K+1$ equations for $K+2$ coefficients of $\mathscr R^{K+1}$, leaving one free for normalization. Observe also that orthogonality now happens with respect to polynomials one degree lower, such that one is only interested in writing $f = p \mathscr R^{K+1} + q$ for $p \in P^{K-1}(I)$. Here, $q$ (the remainder of long division of $f$ by $\mathscr R^{K+1}$) is still of degree $K$, and $f$ therefore is only of degree $2K$, one less than for Gauss-Legendre quadrature. The rest of the arguments is entirely analogous. 

Correspondingly, while 
\begin{align}
\int_I b(f - \mathrm{P}f) \, \dd x = 0 \qquad \forall b \in P^K(I)
\end{align}
defines the $L^2$ projection $\mathrm{P}f \in P^K(I)$ of $f \in L^2$, the Gauss-Radau projection $\mathrm{P}_\text{GR}f \in P^K(I)$ is, for $K\geq 1$, defined by
\begin{align}
f(x^*) - \mathrm{P}_\text{GR}f(x^*) &= 0\\
\int_I b\left(f - \mathrm{P}_\text{GR}f\right) \, \dd x &= 0 \qquad \forall b \in P^{K-1}(I)
\end{align}

From \eqref{eq:radaudefint} one is easily led to the ansatz of a linear combination of $\mathscr L^{K+1}$ and $\mathscr L^K$ for $\mathscr R^{K+1}$. The exact coefficients in the linear combination depend on the normalizations chosen for the Legendre and the Radau polynomials and on the interval $I$. With the usual choices, if $x^*$ is the right endpoint of $I$, $\mathscr R^{K+1}$ is a difference of these Legendre polynomials, and if $x^*$ is the left endpoint, it is their sum.

\subsection{Discontinuous Galerkin}

The DG method for the conservation law \eqref{eq:conslaw} is given by the variational form
\begin{align}
 \frac{1}{\Delta x} \int_{-\frac{\Delta x}{2}}^{\frac{\Delta x}{2}} v \frac{\dd}{\dd t} q_i \, \dd x - \frac{1}{\Delta x} \int_{-\frac{\Delta x}{2}}^{\frac{\Delta x}{2}}  v'  f(q_i)\, \dd x +  \frac{v^+ \hat f_{i+\frac12} - v^- \hat f_{i-\frac12}}{\Delta x} = 0 \label{eq:dg}
\end{align}
for all $v \in P^K\left(\left[-\frac{\Delta x}{2}, \frac{\Delta x}{2}\right]\right)$, and where $v^\pm := v(\pm \frac{\Delta x}{2})$. The second integral needs to be replaced by a quadrature. With $q_i^\pm := q_i(\pm \frac{\Delta x}{2})$, the numerical flux $\hat f_{i+\frac12}$ is assumed to be a two-point flux
\begin{align}
 \hat f_{i+\frac12} = \hat f(q_i^+, q_{i+1}^-) \label{eq:twopointflux}
\end{align}
The upwind flux, for example, amounts to the choice
\begin{align}
 \hat f_{i+\frac12} = \frac{f(q_i^+) + f(q_{i+1}^-)}{2} - \frac12 (J^+ - J^-)(q_{i+1}^-  -q_i^+)  \label{eq:upwindflux}
\end{align}

The superconvergence properties of DG have been known for a long time and are mentioned, studied and used e.g. in \cite{lesaint74,biswas94,adjerid02}, and there is also extensive literature on e.g. parabolic equations. Concerning recent results on hyperbolic PDEs, we refer the reader to \cite{cheng08,cheng10,yang12superconvergence,guo13,cao14linear1dupwind,cao17linear1d} for the linear 1-d case, to \cite{cao2015linear2dupwind,xu22} for the linear scalar tensor-product 2-d case, to \cite{cao2017linearvariable1d} for variable-coefficient linear problems in 1-d and to \cite{cao18} for nonlinear scalar problems in 1-d, among others. High-order solutions can in certain cases be recovered through post-processing, see e.g. \cite{cockburn03,ryan05}.

The following orders of accuracy for smooth solutions are known for DG with $P^K$ approximation (possibly under suitable assumptions on the grid and the initial data) at least for linear problems:
\begin{itemize}
 \item $2K+1$ for cell averages and numerical fluxes,
 \item $K+2$ at Radau points (zeros of the Radau polynomial) or their generalizations,
 \item $K+2$ between the DG solution and the Gauss-Radau projection of the exact solution (see e.g. \cite{zhang04}),
\end{itemize}
The necessity to consider generalizations of Radau points is connected to choices other than that of the upwind numerical flux (see e.g. \cite{cao17linear1d}).

\section{Equivalence between DG and Active Flux and superconvergence of DG} \label{sec:equivalence}

In this Section, consider $K \geq 1$. Observe that the space $V^{K+1}$ of continuous, piecewise polynomial (of degree $K+1$) functions has the same dimension as the DG space $V_\text{br}^K$ of piecewise polynomial functions of degree $K$: both have $K+1$ degrees of freedom per cell.

\begin{definition} \label{def:equiv}
 Methods $\mathrm{M}_1$ and $\mathrm{M}_2$ are \emph{equivalent}, if there exists a bijection $B$ from the degrees of freedom of $\mathrm{M}_1$ to those of $\mathrm{M}_2$, such that upon the action of $B$ the update equations of the degrees of freedom of $\mathrm{M}_1$ become those of $\mathrm{M}_2$.
\end{definition}

In the following we show that the DG method, naturally associated to $V_\text{br}^K$, is equivalent to Active Flux, which is naturally associated to $V^{K+1}$. Often methods are equivalent because they consider different bases of the same approximation space (e.g. modal and nodal DG); in the present case the situation is more interesting since the approximation spaces are not the same. Also, a DG method with polynomial degree $K$ is usually associated to a $K+1$ order of accuracy; Active Flux with polynomial degree $K+1$ has order of accuracy $K+2$ (\cite{abgrall22}). The equivalence between the methods thus is a path towards understanding the superconvergence of DG. 

This paper has been greatly influenced by ideas in \cite{roe17superconvergence} and \cite{cockburn25}, one of which, however, lacks a general proof, and the other does not make a link to Active Flux. In this paper we close these gaps and generalize the results significantly.

\subsection{One-dimensional linear problems}

The semi-discrete Active Flux method \eqref{eq:af2}--\eqref{eq:af1} for the linear advection equation $\del_t q + U \del_x q = 0$
for $U > 0$ reads as follows:
\begin{align}
 \frac{\dd}{\dd t} Q_{i+\frac12} + U \frac{\dd}{\dd x} Q_i\Big|_{x = \frac{\Delta x}{2}} &= 0 \label{eq:af1linear}\\
 \frac{\dd}{\dd t} Q_i^{(k)} - U \frac{A_k}{\Delta x} \int_{-\frac{\Delta x }{2}}^{\frac{\Delta x}{2}} b_k'(x) Q_i(x) \,\dd x+ U A_k\frac{b_k^+ Q_{i+\frac12} - b_k^- Q_{i-\frac12}}{\Delta x}   &=0 \label{eq:af2linear}
\end{align}
For $U < 0$, the point value is upwinded the other way:
\begin{align}
 \frac{\dd}{\dd t} Q_{i+\frac12} + U \frac{\dd}{\dd x} Q_{i+1}\Big|_{x = -\frac{\Delta x}{2}} &= 0 \label{eq:af1negative}
 \end{align}
In \cite{barsukow25afasfe,barsukow2025sbp} also the average of these updates has been considered (central Active Flux):
\begin{align}
 \frac{\dd}{\dd t} Q_{i+\frac12} + \frac12 U \left(\frac{\dd}{\dd x} Q_i\Big|_{x = \frac{\Delta x}{2}} + \frac{\dd}{\dd x} Q_{i+1}\Big|_{x = -\frac{\Delta x}{2}} \right )&= 0 \label{eq:af1central}
 \end{align}

The approximation $Q_i$ in a cell can be written in a basis dual to the degrees of freedom
\begin{align}
 Q_i(x) = Q_{i-\frac12} R^{K+1}_\text{L}(x) + \sum_{k=0}^{K-1} Q_i^{(k)} S^{K+1}_k(x) + Q_{i+\frac12} R^{K+1}_\text{R}(x) \qquad x \in \left[ -\frac{\Delta x}{2}, \frac{\Delta x}{2} \right] \label{eq:af1drecon}
\end{align}
with basis functions $R^{K+1}_\text{L/R}, S^{K+1}_k \in P^{K+1}$. In fact, $R^{K+1}_\text{L/R}$, associated to the point values, are Radau polynomials with the choice $I = \left[-\frac{\Delta x}{2}, \frac{\Delta x}{2}\right]$ in \eqref{eq:radaudefint} and with the normalization choice is as follows:
\begin{align}
 R^{K+1}_\text{L}\left(-\frac{\Delta x}{2} \right) &= 1 &R^{K+1}_\text{R}\left(-\frac{\Delta x}{2} \right) &= 0 &
 R^{K+1}_\text{L}\left(\frac{\Delta x}{2} \right) &= 0 & R^{K+1}_\text{R}\left(\frac{\Delta x}{2} \right) &= 1 \label{eq:radauproperties}
 \end{align}
The analogue of Equation \eqref{eq:radaudefint} for them reads
 \begin{align}
 \int_{-\frac{\Delta x}{2}}^{\frac{\Delta x}{2}} b(x) R^{K+1}_\text{L}(x) \, \dd x &= 0 \qquad \forall b \in P^{K-1}
\end{align}
Radau polynomials are thus a most natural ingredient of the approximation space of Active Flux.

Next, the equivalence between DG and AF for 1-d linear advection is shown. The DG method for the linear advection equation is given by the variational form
\begin{align}
 \frac{1}{\Delta x} \int_{-\frac{\Delta x}{2}}^{\frac{\Delta x}{2}} v \frac{\dd}{\dd t} q_i \, \dd x -U \frac{1}{\Delta x} \int_{-\frac{\Delta x}{2}}^{\frac{\Delta x}{2}}  v'  q_i\, \dd x + U \frac{v^+ \hat q_{i+\frac12} - v^- \hat q_{i-\frac12}}{\Delta x} = 0 \label{eq:dglinear}
\end{align}
for all $v \in P^K\left(\left[-\frac{\Delta x}{2}, \frac{\Delta x}{2}\right]\right)$. Here, $\hat q_{i+\frac12}$ is obtained from the numerical flux $\hat f_{i+\frac12} \simeq Uq$ as $\hat q_{i+\frac12} := \hat f_{i+\frac12} / U$ ($U\neq 0$).
The upwind flux amounts to the choices
\begin{align}
 \hat q_{i+\frac12} = \begin{cases} q_i^+ & U \geq 0 \\ q_{i+1}^- & U < 0 \end{cases} \label{eq:upwindfluxlinear}
\end{align}

We generalize \cite{cockburn25}, and adapt the proof to Active Flux. First, upon an integration by parts, \eqref{eq:dglinear} can be rewritten as follows:
\begin{align}
 \frac{1}{\Delta x} \int_{-\frac{\Delta x}{2}}^{\frac{\Delta x}{2}} v \frac{\dd}{\dd t} q_i \, \dd x + U \frac{1}{\Delta x} \int_{-\frac{\Delta x}{2}}^{\frac{\Delta x}{2}}  v  q_i'\, \dd x + U \frac{v^+ (\hat  q_{i+\frac12} - q_i^+ ) - v^- (\hat q_{i-\frac12} - q_i^- )}{\Delta x} = 0
\end{align}
\begin{figure}
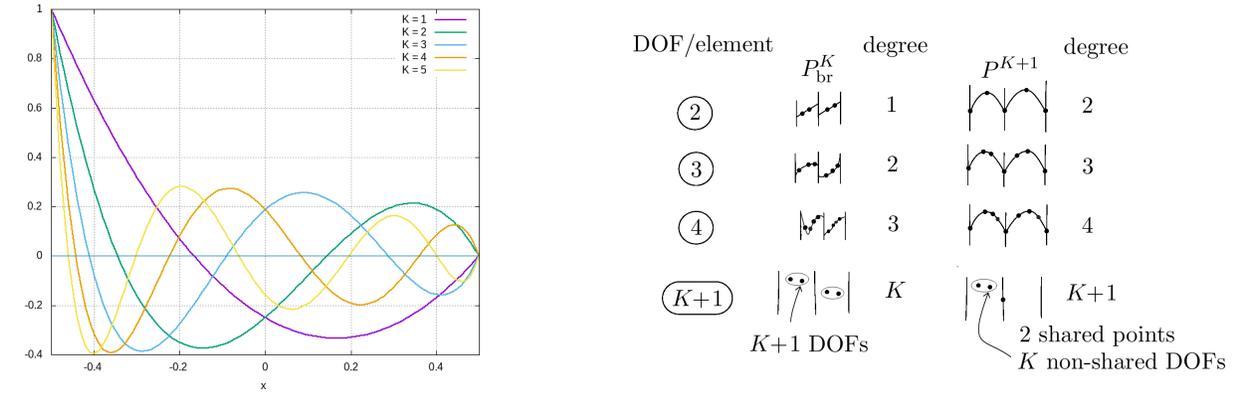

 \centering
 \includegraphics[width=0.4\textwidth]{images/allradauleft.png} \hfill
 \includegraphics[width=0.49\textwidth]{images/Skizze_AF_DG_red-neu.png}
 \caption{\emph{Left}: Radau polynomials $R^{K+1}_\text{L}$ for values $K \in \{ 1,\dots,5 \}$, with $\Delta x = 1$. \emph{Right}: Overview of the equivalent DG and AF methods and their degrees of freedom.}
 \label{fig:overview}
\end{figure}
Observe that now
\begin{align}
 -\frac{v^-}{\Delta x} &= \frac{1}{\Delta x} \int_{-\frac{\Delta x}{2}}^{\frac{\Delta x}{2}} \frac{\dd}{\dd x} (v R^{K+1}_\text{L}) \,\dd x = \frac{1}{\Delta x} \int_{-\frac{\Delta x}{2}}^{\frac{\Delta x}{2}}  v \frac{\dd}{\dd x}  R^{K+1}_\text{L} \,\dd x
\end{align}
and similarly for $v^+$ and $R_\text{R}$. The latter equality holds true because $v' \in P^{K-1}$. 
The DG method thus can be written as

\begin{align}
 \frac{1}{\Delta x} \int_{-\frac{\Delta x}{2}}^{\frac{\Delta x}{2}} v \frac{\dd}{\dd t} q_i \, \dd x + U \frac{1}{\Delta x} \int_{-\frac{\Delta x}{2}}^{\frac{\Delta x}{2}}  v  \frac{\dd}{\dd x}\Big( q_i  + (\hat q_{i+\frac12} - q_i^+ ) R^{K+1}_\text{R} + (\hat q_{i-\frac12} - q_i^- ) R^{K+1}_\text{L}\Big) \,\dd x = 0 \label{eq:dgfinal}
\end{align}

\begin{remark}\label{rem:backindgspace}
Observe that the derivative in the second integral is a polynomial of degree $K$.
\end{remark}

\begin{ident} \label{id:1d}
 Given the DG approximation, the mapping to the degrees of freedom of Active Flux is defined as
 \begin{align}
  Q_{i+\frac12} &:= \hat q_{i+\frac12} 
  &
  Q_i^{(k)} &:= \frac{A_k}{\Delta x} \int_{-\frac{\Delta x}{2}}^{\frac{\Delta x}{2}} b_k q_i \, \dd x \qquad \forall k \leq K-1
 \end{align}
\end{ident}
\begin{remark} 
Consider the upwind numerical flux \eqref{eq:upwindflux} for $U > 0$. Then, $Q_{i+\frac12} := q_i^+$, but $q_i^- \neq Q_{i-\frac12} = q_{i-1}^+$.
\end{remark}

Now, more can be said about the function appearing under the derivative in \eqref{eq:dgfinal}:
\begin{theorem}
 The (broken) DG approximation $q_i \in P^K$ augmented by the two Radau polynomials 
 \begin{align}
  q_i  + (\hat q_{i+\frac12} - q_i^+ ) R^{K+1}_\text{R} + (\hat q_{i-\frac12} - q_i^- ) R^{K+1}_\text{L} 
 \end{align}
 is the (globally continuous) AF approximation $Q_i\in P^{K+1}$ as it was introduced in Section \ref{ssec:af1d}.
\end{theorem}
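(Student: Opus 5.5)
Denote the augmented function by
\begin{align*}
\tilde Q_i := q_i + (\hat q_{i+\frac12} - q_i^+) R^{K+1}_\text{R} + (\hat q_{i-\frac12} - q_i^-) R^{K+1}_\text{L}.
\end{align*}
The plan is to use unisolvence of the AF degrees of freedom. As recalled in Section \ref{ssec:af1d}, a polynomial in $P^{K+1}$ is uniquely determined by its two endpoint values together with the $K$ moments against $b_0,\dots,b_{K-1}$, which span $P^{K-1}$. Since $\tilde Q_i \in P^{K+1}$, it therefore suffices to check two things: that its endpoint values equal the AF point values $Q_{i\pm\frac12}$, and that its moments equal the AF moments $Q_i^{(k)}$. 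Both are taken as given by Identification \ref{id:1d}. The uniqueness of the reconstruction \eqref{eq:af1drecon} then forces $\tilde Q_i = Q_i$.

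\emph{Point values.} I evaluate $\tilde Q_i$ at $x = \frac{\Delta x}{2}$ using the normalization \eqref{eq:radauproperties}: $R^{K+1}_\text{R}(\frac{\Delta x}{2}) = 1$ and $R^{K+1}_\text{L}(\frac{\Delta x}{2}) = 0$. This gives
\begin{align*}
\tilde Q_i\left(\tfrac{\Delta x}{2}\right) = q_i^+ + \hat q_{i+\frac12} - q_i^+ = \hat q_{i+\frac12} = Q_{i+\frac12}.
\end{align*}
In the same way, $\tilde Q_i(-\frac{\Delta x}{2}) = \hat q_{i-\frac12} = Q_{i-\frac12}$. Because the numerical flux value $\hat q_{i+\frac12}$ is single-valued at each interface, neighbouring cells produce the same interface value. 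Hence $\tilde Q_i$ glues to a globally continuous function, which is consistent with $V^{K+1}$.

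\emph{Moments.} For $k \le K-1$ we have $b_k \in P^{K-1}$. The Radau orthogonality \eqref{eq:radaudefint}, in the form stated for $R^{K+1}_\text{L}$ and analogously for $R^{K+1}_\text{R}$, gives $\int b_k R^{K+1}_\text{L/R}\,\dd x = 0$. Therefore
\begin{align*}
\frac{A_k}{\Delta x}\int_{-\frac{\Delta x}{2}}^{\frac{\Delta x}{2}} b_k \tilde Q_i \,\dd x = \frac{A_k}{\Delta x}\int_{-\frac{\Delta x}{2}}^{\frac{\Delta x}{2}} b_k q_i \,\dd x = Q_i^{(k)}.
\end{align*}

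The only point needing care is that the functions $R^{K+1}_\text{L/R}$ appearing in \eqref{eq:af1drecon}, defined as the dual basis to the AF degrees of freedom, really are the Radau polynomials. In other words, they must vanish at the opposite endpoint, equal one at their own endpoint, and be orthogonal to $P^{K-1}$. This holds because duality means all moments against the $b_k$ vanish, and the $b_k$ span $P^{K-1}$. The paper asserts this just after \eqref{eq:af1drecon}, so I would simply invoke it. With that in hand, the argument closes by unisolvence.
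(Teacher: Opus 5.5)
Your proposal is correct and follows the paper's proof. Like the paper, you check the two endpoint values via the normalization \eqref{eq:radauproperties} and the moments via Radau orthogonality against $P^{K-1}$, then conclude by unisolvence of the AF degrees of freedom; you also make explicit the orthogonality and uniqueness steps that the paper's short proof leaves implicit.
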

\begin{proof}
 Using the defining properties \eqref{eq:radauproperties} of the Radau polynomials and Identification \ref{id:1d} one finds
 \begin{align*}
  Q_i\left( \frac{\Delta x}{2} \right ) &= 
  %q_i^+  + (\hat q_{i+\frac12} - q_i^+ ) = 
  Q_{i+\frac12} &
  Q_i\left( -\frac{\Delta x}{2} \right ) &= 
  %q_i^-  + (\hat q_{i-\frac12} - q_i^- ) = 
  Q_{i-\frac12}  &
  \frac{A_k}{\Delta x} \int_{-\frac{\Delta x}{2}}^{\frac{\Delta x}{2}} b_k Q_i \, \dd x &=  Q_i^{(k)}
 \end{align*}
 Thus the DG approximation plus the scaled Radau polynomials is the (continuous) Active Flux approximation (of one polynomial degree higher).
\end{proof}

The final question is whether the DG method implies the same update equations for the degrees of freedom of Active Flux as the classical Active Flux method. It is answered in the affirmative by the following

\begin{theorem}
 For $K\geq 1$, consider the DG method \eqref{eq:dg} with a linear two-point numerical flux $\hat f$, $\hat f \colon (q_\text{L}, q_\text{R}) \mapsto \hat f(q_\text{L}, q_\text{R})$, such that $\hat f_{i+\frac12} = \hat f(q_i^+, q_{i+1}^-)$ is a linear combination $U (\alpha^+ q_i^+ + \alpha^- q_{i+1}^-)$ of the two states. Then, this is the Active Flux method for the degrees of freedom given in the Identification \ref{id:1d}, with the average update \eqref{eq:af1} and the point value update employing the same linear combination of the two adjacent slopes, i.e.
 \begin{align}
 \frac{\dd}{\dd t} Q_{i+\frac12} + U \left(\alpha^+ \frac{\dd}{\dd x} Q_i\Big|_{x = \frac{\Delta x}{2}} + \alpha^- \frac{\dd}{\dd x} Q_{i+1}\Big|_{x = -\frac{\Delta x}{2}} \right )&= 0 \label{eq:lincombpointupdate1d}
 \end{align}
\end{theorem}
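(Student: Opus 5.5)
We need to show two things: the moments of the DG solution evolve according to the AF moment update \eqref{eq:af2linear}, and the quantities $Q_{i+\frac12} := \hat q_{i+\frac12} = \alpha^+ q_i^+ + \alpha^- q_{i+1}^-$ evolve according to \eqref{eq:lincombpointupdate1d}. Throughout, $Q_i$ denotes the augmented function of the previous theorem, which is the AF reconstruction from the identified degrees of freedom.

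\emph{Moment updates.} Start from the rewritten DG form \eqref{eq:dgfinal}. By the previous theorem, the function under the derivative is exactly $Q_i$, so for all $v \in P^K$
\begin{align*}
\frac{1}{\Delta x}\int_{-\frac{\Delta x}{2}}^{\frac{\Delta x}{2}} v\, \frac{\dd}{\dd t} q_i \,\dd x + \frac{U}{\Delta x}\int_{-\frac{\Delta x}{2}}^{\frac{\Delta x}{2}} v\, Q_i' \,\dd x = 0 .
\end{align*}
Choose $v = A_k b_k$ with $k \le K-1$, which is admissible since $b_k \in P^{K-1}\subset P^K$. The first term is then $\frac{\dd}{\dd t} Q_i^{(k)}$ by Identification \ref{id:1d}. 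Integrating the second term by parts gives
\begin{align*}
U A_k\frac{b_k^+ Q_{i+\frac12} - b_k^- Q_{i-\frac12}}{\Delta x} - \frac{U A_k}{\Delta x}\int_{-\frac{\Delta x}{2}}^{\frac{\Delta x}{2}} b_k' Q_i \,\dd x ,
\end{align*}
where we used $Q_i(\pm\frac{\Delta x}{2}) = Q_{i\pm\frac12}$. This is exactly \eqref{eq:af2linear}, i.e.\ the flux-form moment update \eqref{eq:af2} for linear $f$, with no Riemann flux appearing.

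\emph{Point value update.} Differentiate $Q_{i+\frac12} = \alpha^+ q_i^+ + \alpha^- q_{i+1}^-$ in time. It then suffices to show the pointwise identities
\begin{align*}
\frac{\dd}{\dd t} q_i^+ = -U Q_i'\!\left(\tfrac{\Delta x}{2}\right), \qquad \frac{\dd}{\dd t} q_{i+1}^- = -U Q_{i+1}'\!\left(-\tfrac{\Delta x}{2}\right).
\end{align*}
Here we use Remark \ref{rem:backindgspace}: $Q_i' \in P^K$, and $\frac{\dd}{\dd t}q_i \in P^K$ as well. The displayed identity above holds for all $v\in P^K$, so the $L^2$-projection of $\frac{\dd}{\dd t} q_i + U Q_i'$ onto $P^K$ vanishes. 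Since this function already lies in $P^K$, it vanishes identically:
\begin{align*}
\frac{\dd}{\dd t} q_i = -U Q_i' \quad\text{as polynomials on the cell.}
\end{align*}
Evaluating at $x = \frac{\Delta x}{2}$ in cell $i$ and at $x = -\frac{\Delta x}{2}$ in cell $i+1$ gives both identities. Combining them with $\alpha^\pm$ yields \eqref{eq:lincombpointupdate1d}.

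The map in Identification \ref{id:1d} must also be a bijection in the sense of Definition \ref{def:equiv}. The inverse recovers $q_i = Q_i - (Q_{i+\frac12} - q_i^+)R^{K+1}_\text{R} - (Q_{i-\frac12}-q_i^-)R^{K+1}_\text{L}$, but $q_i^\pm$ themselves depend on $q_i$. The cleanest route to invertibility is to observe that both the DG and the AF systems evolve by the same closed linear ODE; from the preceding argument, $q_i$ is determined by $Q_i$ via $\dd q_i/\dd t = -UQ_i'$ together with initial data. Alternatively, one can restrict to flux choices, such as upwind, where $q_i^+ = Q_{i+\frac12}$ makes the inversion explicit cellwise. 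For general $\alpha^\pm$ I would argue invertibility of the global linear map on periodic grids via a Fourier/symbol argument.

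\textbf{Main obstacle.} I expect this bijectivity question, rather than the update computations, to be the delicate step. The update computations reduce to the one observation that $\frac{\dd}{\dd t}q_i + UQ_i'\in P^K$ is orthogonal to all of $P^K$.
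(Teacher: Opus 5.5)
Your derivation of the two update equations is correct and is essentially the paper's proof. The paper tests \eqref{eq:dgfinal} with the Riesz representers $v_\text{R}$ (in cell $i$) and $v_\text{L}$ (in cell $i+1$) of the endpoint evaluations and uses $Q_i'\in P^K$ (Remark \ref{rem:backindgspace}). You instead note that the residual $\frac{\dd}{\dd t}q_i+UQ_i'\in P^K$ is orthogonal to $P^K$ and therefore vanishes identically; this is the same fact, stated for the whole polynomial rather than for two functionals. Your moment update via $v=A_kb_k$ is exactly what the paper does.

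Your closing paragraph on bijectivity, however, goes wrong. The theorem, and the paper's proof, only assert that under Identification \ref{id:1d} the DG evolution yields the AF updates; invertibility is not claimed there, so none of this is needed. The routes you sketch also fail:
\begin{itemize}
\item The ``closed linear ODE'' argument is circular. The relation $\frac{\dd}{\dd t}q_i=-UQ_i'$ reconstructs $q_i(t)$ only from $q_i(0)$, i.e.\ it presupposes the inverse at the initial time.
\item For general $\alpha^\pm$ the identification is in fact not injective. Take the central flux, $K=1$, and $q_i(x)=\frac{2x}{\Delta x}$ in every cell. All averages vanish and $\hat q_{i+\frac12}=\frac12(1-1)=0$, so every AF degree of freedom is zero although $q_h\neq 0$. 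This is a stationary spurious mode of central DG. By the intertwining you proved, the kernel of the identification is always invariant under the DG evolution.
\item A Fourier argument would therefore exhibit the failure rather than prove invertibility. Write $q_i=p_i+c_i\mathscr L^K$ with $p_i\in P^{K-1}$ fixed by the moments and $\mathscr L^K(\frac{\Delta x}{2})=1$. The point values then only determine $\alpha^+c_i+(-1)^K\alpha^-c_{i+1}$, whose symbol $\alpha^++(-1)^K\alpha^-\mathrm{e}^{\mathrm{i}\theta}$ can vanish. On periodic grids this happens for the central flux: always for odd $K$, and for an even number of cells when $K$ is even.
\end{itemize}
For the upwind flux the map is a genuine cellwise bijection, as you suspected. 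A $q\in P^K$ orthogonal to $P^{K-1}$ is a multiple of $\mathscr L^K$, which does not vanish at $\frac{\Delta x}{2}$, so $q^+=0$ forces $q=0$ (Gauss--Radau unisolvence).
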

\begin{proof}
Use the form \eqref{eq:dgfinal} of the DG method: By Riesz' representation theorem, there exist $v_\text{L}, v_\text{R} \in P^K$ such that
 \begin{align}
  \frac{1}{\Delta x} \int_{-\frac{\Delta x}{2}}^{\frac{\Delta x}{2}} v_\text{L} q_i \, \dd x  &= q_i^- \label{eq:leftprojector}\\
  \frac{1}{\Delta x} \int_{-\frac{\Delta x}{2}}^{\frac{\Delta x}{2}} v_\text{R} q_i \, \dd x  &= q_i^+ \label{eq:rightprojector}
 \end{align}
 By linearly combining \eqref{eq:dgfinal} for cells $i$ and $i+1$ with, respectively, $v_\text{R}$ and $v_\text{L}$ as test function, one obtains the evolution equation for $Q_{i+\frac12}$ through Identification \ref{id:1d}. Due to Remark \ref{rem:backindgspace}, the second integral in \eqref{eq:dgfinal} will then be reducing, respectively, to the derivative of $Q_i$ at $x = \frac{\Delta x}{2}$ and that of $Q_{i+1}$ at $x = -\frac{\Delta x}{2}$, giving in total rise to the linear combination given in \eqref{eq:lincombpointupdate1d}.
 For the choice of the upwind flux this reduces to Equations \eqref{eq:af1linear}/\eqref{eq:af1negative} and to \eqref{eq:af1central} for the central flux. The update equation \eqref{eq:af2} for the moments is immediately obvious by choosing $v = b_k$.
\end{proof}
The approach immediately generalizes to linear systems in 1-d (see \eqref{eq:af1}).

\begin{corollary}
 Consider the upwind flux \eqref{eq:upwindflux} and, for definiteness, $U > 0$. Then, the Active Flux approximation $Q_i$ is
 \begin{align}
  Q_i(x) = q_i(x) + ( \hat q_{i-\frac12} - q_i^-) R^{K+1}_\text{L}(x) \label{eq:postprocessed}
 \end{align}
 Active Flux is a method of order of accuracy $K+2$, while DG is usually associated with an order of accuracy $K+1$. However, at the zeros of $R^{K+1}_\text{L}$ (so-called \emph{downwind Radau points}, e.g. \cite{yang12superconvergence}) both approximations agree and the higher order of accuracy becomes visible as superconvergence of DG. We cannot at this point establish the $2K+1$ superconvergence, valid only at the cell interface and not in the interior Radau points.
\end{corollary}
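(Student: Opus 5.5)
The corollary follows from the preceding theorem identifying $q_i + (\hat q_{i+\frac12} - q_i^+) R^{K+1}_\text{R} + (\hat q_{i-\frac12} - q_i^-) R^{K+1}_\text{L}$ with the AF approximation $Q_i$, specialised to the upwind flux. For $U>0$ the upwind flux \eqref{eq:upwindfluxlinear} gives $\hat q_{i+\frac12} = q_i^+$, so the coefficient of $R^{K+1}_\text{R}$ vanishes. The theorem then yields \eqref{eq:postprocessed} directly. The mirror case $U<0$ works the same way with $R^{K+1}_\text{R}$ in place of $R^{K+1}_\text{L}$. The second theorem, with $\alpha^+=1,\alpha^-=0$, confirms that the evolution of these degrees of freedom is exactly the upwind AF method \eqref{eq:af1linear}--\eqref{eq:af2linear}. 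Hence $Q_h$ computed from the DG solution via Identification \ref{id:1d} is the semi-discrete AF solution, provided the initial data are mapped consistently.

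Agreement at the Radau points is then immediate. At any zero $\xi$ of $R^{K+1}_\text{L}$ in the cell, \eqref{eq:postprocessed} gives $Q_i(\xi)=q_i(\xi)$. These zeros are the downwind Radau points: $R^{K+1}_\text{L}$ vanishes at $x=\frac{\Delta x}{2}$ by \eqref{eq:radauproperties}, and by \eqref{eq:radaudefint} it is orthogonal to $P^{K-1}$. A standard sign-change argument shows it has $K+1$ simple zeros in $(-\frac{\Delta x}{2},\frac{\Delta x}{2}]$. Indeed, if it had fewer than $K$ sign changes in the open interval, one could build $b\in P^{K-1}$ with the same sign pattern, contradicting orthogonality.

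The accuracy statement follows by transferring AF's order. Since AF has order $K+2$ (\cite{abgrall22}), $|Q_i(\xi)-q(t,x_i+\xi)| = O(\Delta x^{K+2})$, and therefore $|q_i(\xi)-q(t,x_i+\xi)| = O(\Delta x^{K+2})$ at these points. Meanwhile, at generic points $q_i$ differs from $Q_i$ by $(\hat q_{i-\frac12}-q_i^-)R^{K+1}_\text{L}$, where $\hat q_{i-\frac12}-q_i^-$ is the interface jump of the DG solution. That jump is generically $O(\Delta x^{K+1})$, so one only sees order $K+1$ away from these points.

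The main obstacle is making this order claim rigorous. It requires the AF error estimate to hold pointwise, or at least in a norm controlling the nodal values, for the AF solution started from the mapped DG initial data rather than from AF's native initialisation. I would handle this by assuming, as the paper does elsewhere, that the initial data are chosen compatibly, e.g. DG initialised so that its image under Identification \ref{id:1d} is the AF initial data. Under that assumption the claim rests on the cited convergence result for AF. The final statement about $2K+1$ is a remark, not a claim to prove, so I would only note that the identification gives no information beyond order $K+2$.
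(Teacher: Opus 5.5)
Your proposal is correct and follows the paper's argument, which the paper leaves implicit. Setting $\hat q_{i+\frac12}=q_i^+$ for $U>0$ in the theorem identifying $q_i+(\hat q_{i+\frac12}-q_i^+)R^{K+1}_\text{R}+(\hat q_{i-\frac12}-q_i^-)R^{K+1}_\text{L}$ with $Q_i$ removes the $R^{K+1}_\text{R}$ term, agreement at the zeros of $R^{K+1}_\text{L}$ follows at once, and the order-$K+2$ claim rests on the cited AF accuracy, exactly as in the paper. Your zero count and your caveat about compatible initial data, which amounts to initialising DG by the Gauss--Radau projection, go slightly beyond what the paper states but are consistent with it.
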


\begin{remark}
 For the central flux $\hat q_{i+\frac12} = \frac{q_i^+ + q_{i+1}^-}{2}$, the difference between the DG and the AF approximations remains a weighted sum of both Radau polynomials. Their zeros are distinct such that none of the two sets of points allows to measure superconvergence directly. However, one can study the zeros of the weighted sum of Radau polynomials, see \cite{frean20}. The superconvergence would also become measurable in the moments (which agree), or upon post-processing.
\end{remark}
\begin{remark}
 The important Remark \ref{rem:backindgspace} can be rephrased as: The derivative of the AF approximation is in the (broken) DG space. One will see below that the generalization of this statement to multiple dimensions is not straight forward.
\end{remark}

\subsection{One-dimensional nonlinear problems}

While the equivalence is very clear for linear problems, it is less so for nonlinear ones, since there is no unique AF method for them. 
The point value update requires upwinding, and there are various ways of splitting the Jacobian, and various ways of approximating the flux derivative (compare Jacobian splitting in \cite{barsukow24afeuler} and Flux-Vector Splitting (FVS) of \cite{duan24}). We show in the following that in the nonlinear case DG can be rephrased as a new Active Flux method somewhere between the two choices cited.

To this end, consider the $m \times m$ hyperbolic system \eqref{eq:conslaw} of conservation laws in this Section.
Choosing $v \equiv 1$ in \eqref{eq:dg} yields the update of the average of the DG approximation
\begin{align}
 \frac{\dd}{\dd t } \frac{1}{\Delta x} \int_{-\frac{\Delta x}{2}}^{\frac{\Delta x}{2}} q_i \, \dd x + \frac{\hat f_{i+\frac12} - \hat f_{i-\frac12}}{\Delta x} = 0 
\end{align}
The average update (see \eqref{eq:af2}) for AF reads
\begin{align}
 \frac{\dd}{\dd t} Q_i^{(0)} + \frac{f(Q_{i+\frac12}) - f(Q_{i-\frac12})}{\Delta x} = 0
\end{align}
such that it is natural to identify the two averages and $Q_{i+\frac12} := f^{-1}(\hat f_{i+\frac12})$ wherever $f$ is invertible. (In our opinion this identification really justifies the name ``Active Flux''.) 
For linear $f$, it reduces to Identification \ref{id:1d} discussed previously. 
Then, the update equation for the point value must be
\begin{align}
 \frac{\dd}{\dd t} Q_{i+\frac12} = \Big(f'(f^{-1}(\hat f_{i+\frac12})) \Big)^{-1} \left( \frac{\del \hat f}{\del q_\text{L}} \frac{\dd}{\dd t} q_i^+ +\frac{\del \hat f}{\del q_\text{R}} \frac{\dd}{\dd t} q_{i+1}^-  \right)
\end{align}
with a two-point numerical flux $\hat f(q_\text{L}, q_\text{R})$, as defined in \eqref{eq:twopointflux}.

Recall the test functions $v_\text{L/R}$ defined in \eqref{eq:leftprojector}--\eqref{eq:rightprojector}, which project out $q_i^\pm$. Define also $A_{i+\frac12} := f'(f^{-1}(\hat f_{i+\frac12}))$, an (unusual) estimate of the Jacobian. Since $f \circ q$ is not in $P^{K+1}$ (or not even polynomial in general), one cannot proceed as before. Define thus a projection $F_i \in P^{K+1}$ of $f\circ q_i$ into the space of AF approximations such that
\begin{subequations}
\begin{align}
 \int_{-\frac{\Delta x}{2}}^{\frac{\Delta x}{2}} b (F_i - f \circ q_i) \, \dd x &= 0 \qquad \qquad \forall b \in P^{K-1} \\
 F_i^\pm := F_i\left(\pm \frac{\Delta x}{2}\right ) &= f(Q_{i\pm\frac12}) \equiv \hat f_{i\pm\frac12} 
\end{align} \label{eq:fluxprojection}
 \end{subequations}
Observe also that $F \neq f \circ Q_i$ in general.
Then, the DG method becomes (no approximation here!), $\forall v \in P^K$
\begin{align}
 \frac{1}{\Delta x} \int_{-\frac{\Delta x}{2}}^{\frac{\Delta x}{2}} v \frac{\dd}{\dd t }q_i \, \dd x  - \frac{1}{\Delta x} \int_{-\frac{\Delta x}{2}}^{\frac{\Delta x}{2}} v' F_i \, \dd x + \frac{v^+ F_i^+ - v^- F_i^-}{\Delta x} &= 0 \\
 \frac{1}{\Delta x} \int_{-\frac{\Delta x}{2}}^{\frac{\Delta x}{2}} v \frac{\dd}{\dd t }q_i \, \dd x  + \frac{1}{\Delta x} \int_{-\frac{\Delta x}{2}}^{\frac{\Delta x}{2}} v  \frac{\dd}{\dd x} F_i \, \dd x  &= 0
\end{align}
which is the equivalent of \eqref{eq:dgfinal}. Since $\frac{\dd}{\dd x} F_i \in P^{K}$, one concludes, as before
\begin{align}
 \frac{\dd}{\dd t} Q_{i+\frac12} &= A^{-1}_{i+\frac12} \left( \frac{\del \hat f}{\del q_\text{L}} 
 \frac{\dd}{\dd x} F_i  \Big |_{x = \frac{\Delta x}{2}} + 
 \frac{\del \hat f}{\del q_\text{R}} 
 \frac{\dd}{\dd x} F_{i+1}  \Big |_{x = -\frac{\Delta x}{2}}
 \right)
\end{align}
Of course, the derivatives of $F_i$ are different from those of $f\circ q_i$, which is where the projection \eqref{eq:fluxprojection} becomes relevant.

\begin{example}
For $K=1$ one finds, with $\bar f_i := \frac{1}{\Delta x} \int_{-\frac{\Delta x}{2}}^{\frac{\Delta x}{2}} f(q_i) \, \dd x $
 \begin{align*}
 \frac{\dd}{\dd t} Q_{i+\frac12} &= A^{-1}_{i+\frac12} \left( \frac{\del \hat f}{\del q_\text{L}} 
 \left( \frac{6\bar f_i  - 4 f(Q_{i+\frac12})-2 f(Q_{i-\frac12})}{\Delta x}  \right )
 %\right . \\\nonumber & \qquad \qquad \left. 
 +\frac{\del \hat f}{\del q_\text{R}} 
 \left(  \frac{-6 \bar f_{i+1}  +2 f(Q_{i+\frac32}) + 4 f(Q_{i+\frac12})}{\Delta x} \right) 
 \right)
\end{align*} 
\end{example}

Since the same Finite-Difference-like formulas are used as before, but applied to $F$ instead of $Q$ (as in \eqref{eq:af1}), it is worth introducing the notation
\begin{align}
 (DQ_i)^+ &:= \frac{\dd}{\dd x} Q_i \Big |_{x = \frac{\Delta x}{2}} & 
 (DQ_{i+1})^- &:= \frac{\dd}{\dd x} Q_{i+1} \Big |_{x = -\frac{\Delta x}{2}}
\end{align}

\begin{example}
Consider as $\hat f$ the global Lax-Friedrichs' flux, such that
\begin{align}
 \hat f(q_\text{L} , q_\text{R}) = \frac{f(q_\text{L}) + f(q_\text{R})}{2} - \frac{a}{2} (q_\text{R} - q_\text{L})
\end{align}
for some $a = \mathrm{const}$. Then
\begin{align}
 \frac{\del \hat f}{\del q_\text{L/R}} = \frac12 (f'(q_\text{L/R}) \mp a))
\end{align}
Thus, $\frac{\del \hat f}{\del q_\text{L/R}}$ are a particular way of choosing the positive/negative part $J^\pm_{i+\frac12}$ of the Jacobian $f'$ at $x_{i+\frac12}$. With this, the above point value update can be written as
\begin{align}
 \frac{\dd}{\dd t} Q_{i+\frac12} &=- \left(A^{-1}_{i+\frac12} J^+_{i+\frac12} (DF_i)^+ + A^{-1}_{i+\frac12} J^-_{i+\frac12} (DF_{i+1})^-\right)
\end{align}

Compare this to the approach
\begin{align}
 \frac{\dd}{\dd t} Q_{i+\frac12} &=- \Big(J^+_{i+\frac12} (DQ_i)^+  + J^-_{i+\frac12} (DQ_{i+1})^-\Big)
\end{align}
from \cite{barsukow24afeuler} (where $J^\pm$ were instead defined via the eigenvalues of $J$) and the one from \cite{duan24}:
\begin{align}
 \frac{\dd}{\dd t} Q_{i+\frac12} &=- \Big( (Df_i^+)^+ + (Df_{i+1}^-)^-\Big)
\end{align}
with the Flux-Vector Splitting $f_i^\pm = \frac12 (f(Q_i) \pm a Q_i)$ (for example). The DG-inspired AF thus is closely related to existing variants of AF for nonlinear problems. 
\end{example}

Most importantly, one thus concludes that DG can be written as an Active Flux method even for nonlinear problems.

\subsection{Multi-dimensional linear problems on Cartesian grids}

\subsubsection{A special choice of degrees of freedom for AF}

Above, it has been shown that in 1-d, the classical DG method for linear problems becomes the classical AF method upon a suitable mapping of the degrees of freedom: the two methods are equivalent in the sense of Definition \ref{def:equiv}. A natural DG method on 2-d Cartesian grids employs tensor-product basis and test functions. 

It turns out (already for third-order accuracy) that it is \emph{not} equivalent to the \emph{classical} AF method employing point values at edge midpoints, point values at nodes and cell averages. However, equivalence can be shown for a non-classical point value update. Here, we find it more instructive to instead consider a new, non-classical choice of degrees of freedom for Active Flux, but endowed with a natural update, in a way made precise below. The new third-order accurate, \emph{tensorial} Active Flux method (see Figure \ref{fig:newAF}) uses as degrees of freedom point values at nodes (as before), a cell average (also unchanged) and one-dimensional averages along the edges (instead of values at their midpoints):
\begin{align}
 \bar Q_{i+\frac12,j} \simeq \frac{1}{\Delta y} \int_{-\frac{\Delta y}{2}}^{\frac{\Delta y}{2}} q(t, x, y) \, \dd y \\
 \bar Q_{i,j+\frac12} \simeq \frac{1}{\Delta x} \int_{-\frac{\Delta x}{2}}^{\frac{\Delta x}{2}} q(t, x, y) \, \dd x
\end{align}
These latter will be referred to as \emph{edge averages}.

These degrees of freedom are those of Virtual Finite Elements (VEM) (see e.g. \cite{beirao13}, and in particular \cite{abgrall24pampa} for their usage in Active Flux methods on triangular grids). These degrees of freedom might also call for comparison with those of the Whitney finite elements, which associate to each $k$-cell (in the language of algebraic topology) an average over it. Here, however, orientation is not taken into account and all variables remain scalars. One-dimensional averages were also employed recently in \cite{grunwald25}, albeit only as an approximation in conjunction with a splitting into one-dimensional problems, and not in an effort to explore alternative multi-dimensional formulations of Active Flux.

\begin{figure}
 \centering
 \includegraphics[width=0.6\textwidth]{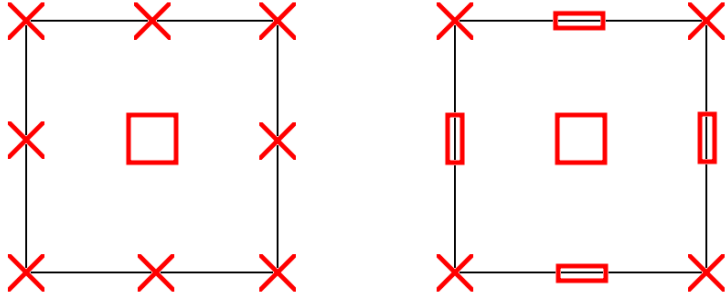}
 \caption{\emph{Left}: Classical degrees of freedom of Active Flux on Cartesian grids (third-order accuracy). \emph{Right}: The modified degrees of freedom proposed here. The edge midpoints are replaced by one-dimensional averages along the edge.}
 \label{fig:newAF}
\end{figure}

The special property associated with the usage of edge averages instead of midpoints is the tensor-product structure of the basis functions. If the one-dimensional AF approximation $Q_i$ is written as
\begin{align}
 Q_i(x) = Q_{i-\frac12} R_\text{L}^{K+1}(x) + Q_i^{(0)} S_0^{K+1}(x) + Q_{i+\frac12} R_\text{R}^{K+1}(x)
\end{align}
(see Equation \eqref{eq:af1drecon}), then the two-dimensional tensor-product approximation is
\begin{align}
 Q_{ij}(x) &= Q_{i-\frac12,j-\frac12} R_{\text{L},\Delta x}^{K+1}(x) R_{\text{L},\Delta y}^{K+1}(y) + Q_{i+\frac12,j-\frac12} R_{\text{R},\Delta x}^{K+1}(x) R_{\text{L},\Delta y}^{K+1}(y) + \dots    
 \\&\nonumber + Q_{ij}^{(0)} S_{0,\Delta x}^{K+1}(x)S_{0,\Delta y}^{K+1}(y) 
 + \bar Q_{i-\frac12,j} R_{\text{L},\Delta x}^{K+1}(x)S_{0,\Delta y}^{K+1}(y)+ \bar Q_{i+\frac12,j} R_{\text{R},\Delta x}^{K+1}(x)S_{0,\Delta y}^{K+1}(y) \\
 &+ \nonumber \bar Q_{i,j-\frac12} S_{0,\Delta x}^{K+1}(x)R_{\text{L},\Delta y}^{K+1}(y)+ \bar Q_{i,j+\frac12} S_{0,\Delta x}^{K+1}(x)R_{\text{R},\Delta y}^{K+1}(y)
\end{align}
The usage of classical edge midpoints does not lead to a tensor-product structure of the basis function of AF, as pointed out in \cite{barsukow25afasfe}.
This property, however, is not used directly to show equivalence between DG and AF below.

Simpson's rule allows to convert between point values and edge averages without losing accuracy:
\begin{align}
 \bar Q_{i+\frac12,j} = \frac{Q_{i+\frac12,j+\frac12} + 4 Q_{i+\frac12,j} + Q_{i+\frac12,j-\frac12}}{6}
\end{align}
Why does it matter then whether a point value or a one-dimensional average is used?
The difference lies in what one considers their natural update. 

\begin{example}
Consider linear advection with $U^x, U^y > 0$
\begin{align}
 \del_t q + U^x \del_x q+ U^y \del_y q = 0 \qquad U^x, U^y \in \mathbb R^+ \label{eq:linadvmultid}
\end{align}
The edge midpoint value is naturally updated as
\begin{align}
 \frac{\dd}{\dd t} Q_{i+\frac12,j} + U^x \frac{\del}{\del x} Q_{ij} \Big|_{x = \frac{\Delta x}{2}, y = 0} + U^y \frac{\del}{\del y} Q_{ij} \Big|_{x = \frac{\Delta x}{2}, y = 0} = 0 \label{eq:updateedgemidpoint}
\end{align}
while the edge-average is naturally updated by integrating \eqref{eq:linadvmultid} over the edge and thus as
\begin{align}
 \frac{\dd}{\dd t} \bar Q_{i+\frac12,j} + U^x  \frac{1}{\Delta y} \int_{-\frac{\Delta y}{2}}^{\frac{\Delta y}{2}} \frac{\del}{\del x} Q_{ij} \Big|_{x = \frac{\Delta x}{2}} \dd y + U^y \frac{Q_{i+\frac12, j+\frac12} - Q_{i+\frac12,j-\frac12}}{\Delta y} = 0 \label{eq:updateedgeaverage}
\end{align}

The integral would be replaced by quadrature as:
\begin{align}
 \frac{1}{\Delta y} \int_{-\frac{\Delta y}{2}}^{\frac{\Delta y}{2}} \frac{\del}{\del x} Q_{ij} \Big|_{x = \frac{\Delta x}{2}} \dd y = \frac16 \left( \frac{\del}{\del x} Q_{ij} \Big|_{x = \frac{\Delta x}{2}, y=\frac{\Delta y}{2}} + 4 \frac{\del}{\del x} Q_{ij} \Big|_{x = \frac{\Delta x}{2}, y=0} + \frac{\del}{\del x} Q_{ij} \Big|_{x = \frac{\Delta x}{2}, y=-\frac{\Delta y}{2}}    \right )
\end{align}
Explicit calculation (omitted) shows that \eqref{eq:updateedgeaverage} and \eqref{eq:updateedgemidpoint} are not equivalent, in the sense that
\begin{align}
 \frac{\dd}{\dd t} \bar Q_{i+\frac12,j}  \neq \frac16 \left( \frac{\dd}{\dd t} Q_{i+\frac12,j+\frac12}  + 4 \frac{\dd}{\dd t} Q_{i+\frac12,j}+ \frac{\dd}{\dd t} Q_{i+\frac12,j-\frac12}  \right) \label{eq:updateedgemismatch}
\end{align}
\end{example}

One can, however, use \eqref{eq:updateedgemismatch} to \emph{define} a modified update of $Q_{i+\frac12,j}$. This way, one will end up with an AF method employing classical degrees of freedom, in particular, point values at edge midpoints, but with a new update prescription for the latter. We prefer the opposite choice here, and use edge averages as non-classical degrees of freedom, but endow them with the natural update \eqref{eq:updateedgeaverage}.

For completeness, the natural update of the nodal point values is (see e.g. \cite{barsukow24afeuler})
\begin{align}
 \frac{\dd}{\dd t} Q_{i+\frac12,j+\frac12} + U^x \frac{\del}{\del x} Q_{ij} \Big|_{x = \frac{\Delta x}{2}, y = \frac{\Delta y}{2}} + U^y \frac{\del}{\del y} Q_{ij} \Big|_{x = \frac{\Delta x}{2}, y = \frac{\Delta y}{2}} = 0 \label{eq:updatenodal}
\end{align}
and that of the average is
\begin{align}
 \frac{\dd}{\dd t} Q^{(0)}_{ij} + U^x \frac{\bar Q_{i+\frac12,j} - \bar Q_{i-\frac12,j}}{\Delta x} + U^y \frac{\bar Q_{i, j+\frac12} - \bar Q_{i,j-\frac12}}{\Delta y} = 0 \label{eq:updateaverage}
\end{align}

Conversely, it should also be possible to stick to a classical AF method, but a devise a non-classical DG method that is equivalent to it. We will address this as part of future work.

\subsubsection{The equivalence of DG and AF}

In this Section, we focus on showing equivalence between DG and tensorial AF in the multi-dimensional case for linear advection only, and postpone the nonlinear case and that of systems to future work due to not insignificant technical challenges associated with these.

Consider DG on Cartesian grids with a tensor basis, i.e. test and basis functions $v(x) w(y)$, $v \in P^K\left(\left [ - \frac{\Delta x}{2}, \frac{\Delta x}{2} \right ] \right )$, $w \in P^K\left(\left [ - \frac{\Delta y}{2}, \frac{\Delta y}{2} \right ] \right )$, for two-dimensional linear advection \eqref{eq:linadvmultid} with no restrictions on the signs of $U^x$ and $U^y$:
\begin{align}
 &\frac{1}{\Delta x} \int_{-\frac{\Delta x}{2}}^{\frac{\Delta x}{2}} \frac{1}{\Delta y} \int_{-\frac{\Delta y}{2}}^{\frac{\Delta y}{2}} v w \frac{\dd}{\dd t} q_{ij} \,  \dd y \dd x  - \frac{1}{\Delta x} \int_{-\frac{\Delta x}{2}}^{\frac{\Delta x}{2}} \frac{1}{\Delta y} \int_{-\frac{\Delta y}{2}}^{\frac{\Delta y}{2}} \left( v' w U^x q_{ij} + v w' U^y q_{ij}\right) \,  \dd y \dd x \label{eq:dg2d}  \\
 &+ \nonumber\frac{1}{\Delta y} \int_{-\frac{\Delta y}{2}}^{\frac{\Delta y}{2}} w U^x \frac{v^+ \hat q_{i+\frac12,j} - v^- \hat q_{i-\frac12,j}}{\Delta x } \dd y + \frac{1}{\Delta x} \int_{-\frac{\Delta x}{2}}^{\frac{\Delta x}{2}} v U^y \frac{w^+ \hat q_{i,j+\frac12} - w^- \hat q_{i,j-\frac12}}{\Delta y} \dd x
 = 0
\end{align}
Recall that $\hat f^x_{i+\frac12,j}$ is obtained by means of a numerical flux $\hat f^x \colon \mathbb R \times \mathbb R \to \mathbb R$ as
\begin{align}
 \hat f^x_{i+\frac12,j}(t, y) := \hat f^x\left(q_{ij}\left(t, \frac{\Delta x}{2}, y\right), q_{i+1,j}\left(t, -\frac{\Delta x}{2}, y\right)\right)
\end{align}
The time-dependence in $q_{ij}$ will be again omitted now, and analogously for $\hat f^y$.

Here, $\hat q_{i+\frac12,j},\hat q_{i,j+\frac12} \colon \mathbb R^+_0 \times \mathbb R \to \mathbb R$ are obtained from the numerical fluxes $\hat f^x_{i+\frac12,j} \simeq U^x q$, $\hat f^y_{i,j+\frac12} \simeq U^y q$ as
\begin{align}
 \hat q_{i+\frac12,j}  &:= \begin{cases} \hat f^x_{i+\frac12,j} / U^x & U^x \neq 0 \\ 0 & \text{else} \end{cases} & \hat q_{i,j+\frac12}  &:= \begin{cases} \hat f^y_{i,j+\frac12} / U^y & U^y \neq 0 \\ 0 & \text{else} \end{cases}
\end{align}
 
The upwind flux amounts to choosing
\begin{align}
  \hat q_{i+\frac12,j} &= \begin{cases} q_{ij}\left(\frac{\Delta x}{2}, y\right) & U^x > 0 \\ q_{i+1,j}\left(-\frac{\Delta x}{2}, y\right) & \text{else} \end{cases} & \hat q_{i,j+\frac12} &= \begin{cases} q_{ij}(x, \frac{\Delta y}{2})& U^y > 0 \\ q_{i,j+1}(x, -\frac{\Delta y}{2}) & \text{else} \end{cases}
\end{align}

Upon an integration by parts \eqref{eq:dg2d} becomes 

 \begin{align}
 \frac{1}{\Delta x} &\int_{-\frac{\Delta x}{2}}^{\frac{\Delta x}{2}} \frac{1}{\Delta y} \int_{-\frac{\Delta y}{2}}^{\frac{\Delta y}{2}} v w \frac{\dd}{\dd t} q_{ij} \,  \dd y \dd x  + \frac{1}{\Delta x} \int_{-\frac{\Delta x}{2}}^{\frac{\Delta x}{2}} \frac{1}{\Delta y} \int_{-\frac{\Delta y}{2}}^{\frac{\Delta y}{2}}  v w \left(U^x \del_x q_{ij} + U^y \del_y q_{ij}\right) \,  \dd y \dd x  \label{eq:multiddginterm} \\
  &+ \nonumber
 \frac{1}{\Delta y} \int_{-\frac{\Delta y}{2}}^{\frac{\Delta y}{2}} w U^x v^+\frac{ \hat q_{i+\frac12,j} - q_{ij}(\frac{\Delta x}{2}, y)  }{\Delta x } \dd y - 
 \frac{1}{\Delta y} \int_{-\frac{\Delta y}{2}}^{\frac{\Delta y}{2}} w U^x v^-\frac{ \hat q_{i-\frac12,j} -q_{ij}(-\frac{\Delta x}{2}, y)  }{\Delta x } \dd y \\ & \nonumber +
  \frac{1}{\Delta x} \int_{-\frac{\Delta x}{2}}^{\frac{\Delta x}{2}} v U^y w^+\frac{ \hat q_{i,j+\frac12} - q_{ij}(x, \frac{\Delta y}{2})  }{\Delta y} \dd x 
 -\frac{1}{\Delta x} \int_{-\frac{\Delta x}{2}}^{\frac{\Delta x}{2}} v U^y w^-\frac{ \hat q_{i,j-\frac12} - q_{ij}(x, -\frac{\Delta y}{2})}{\Delta y} \dd x 
 = 0 
\end{align}

This makes it natural to define
\begin{align}
 r^x_{ij}(x, y) &:= \left( \hat q_{i+\frac12,j} - q_{ij}\left(\frac{\Delta x}{2}, y\right) \right ) R_{\text{R},\Delta x}^{K+1}(x) + \left(\hat q_{i-\frac12,j} - q_{ij}\left(-\frac{\Delta x}{2}, y\right)\right ) R_{\text{L},\Delta x}^{K+1}(x) \label{eq:rxdef} \\
 r^y_{ij}(x, y) &:= \left( \hat q_{i,j+\frac12} - q_{ij}\left(x, \frac{\Delta y}{2}\right)  \right ) R_{\text{R}, \Delta y}^{K+1}(y) + \left(\hat q_{i,j-\frac12} - q_{ij}\left(x, -\frac{\Delta y}{2}\right)\right ) R_{\text{L},\Delta y}^{K+1}(y) \label{eq:rydef}
\end{align}
Here the length of the interval involved in the definition of the Radau polynomial is made explicit as an additional subscript.
Since the test functions factorize, one immediately has
\begin{align}
 \frac{1}{\Delta x} \int_{-\frac{\Delta x}{2}}^{\frac{\Delta x}{2}} \frac{1}{\Delta y} \int_{-\frac{\Delta y}{2}}^{\frac{\Delta y}{2}} v' w r^x_{ij} \, \dd y \dd x &= 0 &
 \frac{1}{\Delta x} \int_{-\frac{\Delta x}{2}}^{\frac{\Delta x}{2}} \frac{1}{\Delta y} \int_{-\frac{\Delta y}{2}}^{\frac{\Delta y}{2}} v w' r^y_{ij} \, \dd y \dd x &= 0
\end{align}
since
\begin{align}
 \int_{-\frac{\Delta x}{2}}^{\frac{\Delta x}{2}} b R_\text{L}^{K+1} \, \dd x = \int_{-\frac{\Delta x}{2}}^{\frac{\Delta x}{2}} b R_\text{R}^{K+1} \, \dd x  &= 0 \qquad \forall b \in P^{K-1}
\end{align}

By using the properties \eqref{eq:radauproperties} of Radau polynomials one thus finds
\begin{align}
 &\frac{1}{\Delta x} \int_{-\frac{\Delta x}{2}}^{\frac{\Delta x}{2}} \frac{1}{\Delta y} \int_{-\frac{\Delta y}{2}}^{\frac{\Delta y}{2}} v w \frac{\del}{\del x} r^x_{ij} \, \dd x \dd y =\frac{1}{\Delta x} \int_{-\frac{\Delta x}{2}}^{\frac{\Delta x}{2}} \frac{1}{\Delta y} \int_{-\frac{\Delta y}{2}}^{\frac{\Delta y}{2}} w \frac{\del}{\del x}(v  r^x_{ij}) \, \dd x \dd y\\
&=\frac{1}{\Delta y} \int_{-\frac{\Delta y}{2}}^{\frac{\Delta y}{2}} w(y) \left( v^+ \frac{ \hat q_{i+\frac12,j} - q_{ij}\left(\frac{\Delta x}{2}, y\right) }{\Delta x}   - v^- \frac{\hat q_{i-\frac12,j} - q_{ij}\left(-\frac{\Delta x}{2}, y\right)}{\Delta x}  \right) \dd y
\end{align}
and analogously for the other direction.
This can be used to replace the corresponding terms in \eqref{eq:multiddginterm} thus obtaining
\begin{align}
 \frac{1}{\Delta x} \int_{-\frac{\Delta x}{2}}^{\frac{\Delta x}{2}} \frac{1}{\Delta y} &\int_{-\frac{\Delta y}{2}}^{\frac{\Delta y}{2}} v w \frac{\dd}{\dd t} q_{ij} \,  \dd y \dd x  + \frac{1}{\Delta x} \int_{-\frac{\Delta x}{2}}^{\frac{\Delta x}{2}} \frac{1}{\Delta y} \int_{-\frac{\Delta y}{2}}^{\frac{\Delta y}{2}}  v w \left(U^x \del_x (q_{ij} + r^x_{ij}) + U^y \del_y (q_{ij} + r^y_{ij})\right) \,  \dd y \dd x  = 0 \label{eq:vwupdateintermiedate}
\end{align}

\begin{remark} \label{rem:afdgspace}
 The situation at this point differs from that of the one-dimensional case (which was Equation \eqref{eq:dgfinal}): Observe that the derivatives in \eqref{eq:vwupdateintermiedate} act on \emph{different} functions, and neither $q_{ij} + r^x_{ij}$, nor $q_{ij} + r^y_{ij}$ equals the AF approximation (as will be shown next). This is actually very good news: while both $\del_x (q_{ij} + r^x_{ij})$ and $\del_y (q_{ij} + r^y_{ij})$ are in the DG space $Q^K$, the same first derivatives of the $Q^{K+1}$ AF approximation would be outside of this space. Having the AF approximation appear there would not allow to generalize the 1-d approach.
\end{remark}

\begin{lemma} \label{lem:recon}
There exist $C_{ij}^\text{LL},C_{ij}^\text{LR},C_{ij}^\text{RL},C_{ij}^\text{RR} \in \mathbb R$ such that 
\begin{align}
 Q_{ij} &=  q_{ij} + r^x_{ij} + r^y_{ij} \\&\nonumber + C_{ij}^\text{LL} R_{\text{L},\Delta x}^{K+1}(x) R_{\text{L},\Delta y}^{K+1}(y) +
 C_{ij}^\text{LR} R_{\text{L},\Delta x}^{K+1}(x) R_{\text{R},\Delta y}^{K+1}(y)
 \\&\nonumber +C_{ij}^\text{RL} R_{\text{R},\Delta x}^{K+1}(x) R_{\text{L},\Delta y}^{K+1}(y)
 +C_{ij}^\text{RR} R_{\text{R},\Delta x}^{K+1}(x) R_{\text{R},\Delta y}^{K+1}(y) 
\end{align}
with $r^x_{ij}, r^y_{ij}$ given in \eqref{eq:rxdef}--\eqref{eq:rydef}
is the AF approximation.
\end{lemma}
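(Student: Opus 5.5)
We verify that the right-hand side, call it $\tilde Q_{ij}\in Q^{K+1}$, has exactly the tensorial AF degrees of freedom obtained from the DG data, for a suitable choice of the four constants $C^{\cdot\cdot}_{ij}$. By unisolvence of the tensor-product AF basis (the product of the 1-d dual bases $R_\text{L}, S_k, R_\text{R}$), equality of degrees of freedom then gives $\tilde Q_{ij}=Q_{ij}$. The natural 2-d analogue of Identification~\ref{id:1d} will be the following. Interior moments are $\frac{1}{\Delta x\Delta y}\int\int b_k(x)b_l(y) q_{ij}$ for $k,l\le K-1$. Edge moments on the vertical edge $x=\pm\frac{\Delta x}{2}$ are $\frac{1}{\Delta y}\int b_l(y)\hat q_{i\pm\frac12,j}(y)\,\dd y$, and analogously on the horizontal edges. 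Node values are fixed by the corner conditions below. Since $\tilde Q_{ij}$ lies in $Q^{K+1}$ (each term is a product of degree $\le K+1$ polynomials in $x$ and $y$), only these degrees of freedom need checking.

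The checks proceed by the same mechanism as in the 1-d proof, using \eqref{eq:radauproperties} and the orthogonality of $R_\text{L/R}$ to $P^{K-1}$.

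\emph{Interior moments.} Test against $b_k(x)b_l(y)$. Every term of $r^x_{ij}$, $r^y_{ij}$ and the $C$-terms carries a Radau factor in $x$ or in $y$. Integrating against $b_k$ or $b_l$ of degree $\le K-1$ kills it, so only $q_{ij}$ survives, as required.

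\emph{Edge moments.} Evaluate at $x=\frac{\Delta x}{2}$. There $R_\text{R}=1$ and $R_\text{L}=0$, so $q_{ij}+r^x_{ij}$ restricts to $\hat q_{i+\frac12,j}(y)$. The term $r^y_{ij}$ restricts to a combination of $R_{\text{L/R},\Delta y}(y)$, and the $C$-terms restrict to $C^{\text{R}\cdot}R_{\cdot,\Delta y}(y)$. Testing in $y$ against $b_l\in P^{K-1}$ removes both of these, leaving the edge moment of $\hat q_{i+\frac12,j}$. The other three edges are symmetric.

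\emph{Node values.} At the corner $(\frac{\Delta x}{2},\frac{\Delta y}{2})$ the only surviving terms give
\begin{align*}
\tilde Q_{ij}\left(\tfrac{\Delta x}{2},\tfrac{\Delta y}{2}\right) = \hat q_{i+\frac12,j}\left(\tfrac{\Delta y}{2}\right) + \hat q_{i,j+\frac12}\left(\tfrac{\Delta x}{2}\right) - q_{ij}\left(\tfrac{\Delta x}{2},\tfrac{\Delta y}{2}\right) + C^\text{RR}_{ij}.
\end{align*}
Here $q_{ij}+r^x_{ij}$ evaluates to $\hat q_{i+\frac12,j}$, and $r^y_{ij}$ contributes $\hat q_{i,j+\frac12}-q_{ij}$. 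Since exactly one $C$ appears at each corner, the four constants can be solved for uniquely so that the corner value equals any prescribed nodal value. This is why the lemma only asserts existence.

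The main obstacle is to make this consistent with \emph{global} continuity. Each node is shared by four cells, so the prescribed node value must be a single number $Q_{i+\frac12,j+\frac12}$, and each edge function must match between its two neighbouring cells. The edge moments and the edge trace of $\tilde Q$ are built from the single function $\hat q_{i+\frac12,j}$ and are therefore shared automatically. For the corners, I would first try defining $Q_{i+\frac12,j+\frac12}$ via the corresponding one-dimensional flux evaluated at the node, so that $C^{\cdot\cdot}_{ij}$ absorbs the cell-dependent mismatch $\hat q^x+\hat q^y-q_{ij}$. I would then check that the resulting traces on each edge coincide: both cells see $\hat q$ plus Radau corrections in the tangential variable, which are fixed by the two shared endpoint values and the shared edge moments. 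Unisolvence of the 1-d AF space on the edge then forces the traces to agree. Any consistent choice of nodal values suffices for the lemma; identifying the nodal values with those whose updates are natural is left for the subsequent equivalence argument.
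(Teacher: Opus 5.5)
Your proposal is correct and follows the same route as the paper's proof. In both, the orthogonality of $R_{\text{L/R}}^{K+1}$ to $P^{K-1}$ removes all correction terms from the cell moments and from the edge moments (after taking the trace, where $q_{ij}+r^x_{ij}$ reduces to $\hat q_{i+\frac12,j}$), and the four constants are then solved for from the corner values exactly as the paper does for $C^{\text{RR}}_{ij}$. Your only extension is that you check the full tensor set of moments for general $K$, rather than just the cell and edge averages. Your closing observation that any single-valued nodal value works is precisely the paper's remark that the lemma is independent of the definition of $Q_{i+\frac12,j+\frac12}$. Just note that a ``one-dimensional flux at the node'' is admissible only if all four cells use the same number, since e.g.\ $\hat q_{i+\frac12,j}(\frac{\Delta y}{2})\neq \hat q_{i+\frac12,j+1}(-\frac{\Delta y}{2})$ in general.
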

The proof can be found in Section \ref{sec:lemrecon}, where it is shown that the statement of this Lemma is independent of the definition of $Q_{i+\frac12,j+\frac12}$. 

In 1-d, the point value of Active Flux could be directly linked to the numerical flux of DG (see Identification \ref{id:1d}). In multi-d, this has an analogy in the edge averages. The nodal point values, however, involve the $x$-flux and the $y$-flux simultaneously. Consider from now on therefore the following linear numerical fluxes:
\begin{align}
 \hat q_{i+\frac12,j}(y) &:= \alpha^+ q_{ij}\left(\frac{\Delta x}{2}, y\right ) + \alpha^- q_{i+1,j}\left(-\frac{\Delta x}{2}, y\right )\\
 \hat q_{i,j+\frac12}(x) &:=  \beta^+ q_{ij}\left(x, \frac{\Delta y}{2}\right ) + \beta^- q_{i,j+1}\left(x, -\frac{\Delta y}{2}\right ) 
\end{align}
Consistency requires $\alpha^+  + \alpha^- = \beta^+ + \beta^- = 1$. 
This allows to proceed to the following

\begin{ident}
Consider $K=1$. Starting from the DG approximation, define the degrees of freedom of tensorial AF as
\begin{align}
 Q_{i+\frac12,j+\frac12} &:= \alpha^+ \beta^+ q_{ij}\left(\frac{\Delta x}{2}, \frac{\Delta y}{2}\right) + \alpha^- \beta^+ q_{i+1,j}\left(-\frac{\Delta x}{2}, \frac{\Delta y}{2}\right)\label{eq:identification2dnodal} \\\nonumber &+ \alpha^+ \beta^- q_{i,j+1}\left(\frac{\Delta x}{2}, -\frac{\Delta y}{2}\right) + \alpha^- \beta^- q_{i+1,j+1}\left(-\frac{\Delta x}{2}, -\frac{\Delta y}{2}\right)
\end{align}
\begin{align}
 %\bar Q_{i+\frac12,j} &:= \frac{1}{\Delta y} \int_{-\frac{\Delta y}{2}}^{\frac{\Delta y}{2}} q_{ij}\left( \frac{\Delta x}{2}, y\right ) \, \dd y &
 \bar Q_{i+\frac12,j} &:= \frac{1}{\Delta y} \int_{-\frac{\Delta y}{2}}^{\frac{\Delta y}{2}} \hat q_{i+\frac12,j}(y) \, \dd y &
 %\bar Q_{i,j+\frac12} &:= \frac{1}{\Delta x} \int_{-\frac{\Delta x}{2}}^{\frac{\Delta x}{2}} q_{ij}\left( x, \frac{\Delta y}{2}\right ) \, \dd x
 \bar Q_{i,j+\frac12} &:= \frac{1}{\Delta x} \int_{-\frac{\Delta x}{2}}^{\frac{\Delta x}{2}} \hat q_{i,j+\frac12}(x) \, \dd x\label{eq:identification2dedge}
 \end{align}
 \begin{align}
 Q_{ij}^{(0)} := \frac{1}{\Delta x} \int_{-\frac{\Delta x}{2}}^{\frac{\Delta x}{2}}\frac{1}{\Delta y} \int_{-\frac{\Delta y}{2}}^{\frac{\Delta y}{2}} q_{ij}(x, y) \, \dd y \dd x \label{eq:identification2davg}
\end{align}
\end{ident}
This identification is reminiscent of the definition of the generalized Gauss-Radau projections in \cite{xu22}, which, however, do not themselves appear in this discussion.
The choice of the point value ensures that
\begin{align}
 \alpha^+ \hat q_{i,j+\frac12}\left(\frac{\Delta x}{2}\right) + \alpha^- \hat q_{i+1,j+\frac12}\left(-\frac{\Delta x}{2}\right) = \beta^+ \hat q_{i+\frac12,j}\left(\frac{\Delta y}{2}\right )  +    \beta^- \hat q_{i+\frac12,j+1}\left(-\frac{\Delta y}{2}\right ) = Q_{i+\frac12,j+\frac12} \label{eq:pointvalid}
\end{align}

\begin{remark} 
Consider $U^x, U^y > 0$ and the upwind flux. Then, 
\begin{align}
 Q_{i+\frac12,j+\frac12} := q_{ij}\left(\frac{\Delta x}{2}, \frac{\Delta y}{2}\right) 
\end{align}
\begin{align}
 \bar Q_{i+\frac12,j} &:= \frac{1}{\Delta y} \int_{-\frac{\Delta y}{2}}^{\frac{\Delta y}{2}} q_{ij}\left( \frac{\Delta x}{2}, y\right ) \, \dd y &
 \bar Q_{i,j+\frac12} &:= \frac{1}{\Delta x} \int_{-\frac{\Delta x}{2}}^{\frac{\Delta x}{2}} q_{ij}\left( x, \frac{\Delta y}{2}\right ) \, \dd x
 \end{align}
\end{remark}

The following Lemma establishes the update equations for linear combinations of some special values of the DG approximation:

\begin{lemma} \label{lem:flux}
 For any $\alpha,\beta,\gamma,\delta \in \mathbb R$ one has
 \begin{align}
  \frac{\dd}{\dd t} \frac{1}{\Delta y} &\int_{-\frac{\Delta y}{2}}^{\frac{\Delta y}{2}} \left(\alpha  q_{ij}\left(\frac{\Delta x}{2},y\right) + \beta  q_{i+1,j}\left(-\frac{\Delta x}{2},y\right)\right ) \,  \dd y \label{eq:intermedevoledgemomentx}
  \\\nonumber&+ \frac{1}{\Delta y} \int_{-\frac{\Delta y}{2}}^{\frac{\Delta y}{2}}  
  U^x \left(\alpha \del_x (q_{ij} + r^x_{ij})\Big |_{x=\frac{\Delta x}{2}} + \beta \del_x (q_{i+1,j} + r^x_{i+1,j})\Big |_{x=-\frac{\Delta x}{2}} \right ) \,  \dd y 
  \\\nonumber&+ \frac{1}{\Delta y}  U^y \left( \left[\alpha  \hat q_{i,j+\frac12}\left(\frac{\Delta x}{2}\right)     + \beta  \hat q_{i+1,j+\frac12}\left(-\frac{\Delta x}{2}\right)\right] - \left[\alpha \hat q_{i,j-\frac12}\left(\frac{\Delta x}{2}\right)   + \beta \hat q_{i+1,j-\frac12}\left(-\frac{\Delta x}{2}\right)  \right] \right )
  = 0
 \end{align}
 as well as 
 \begin{align}
 \frac{\dd}{\dd t} \frac{1}{\Delta x} &\int_{-\frac{\Delta x}{2}}^{\frac{\Delta x}{2}}  \left(\alpha q_{ij}\left(x, \frac{\Delta y}{2}  \right ) +\beta q_{i,j+1}\left( x, -\frac{\Delta y}{2}  \right ) \right )  \,  \dd x   \label{eq:intermedevoledgemomenty}
 \\\nonumber&+ \frac{1}{\Delta x}   U^x \left( \left[\alpha   \hat q_{i+\frac12,j}\left(\frac{\Delta y}{2}\right )  + \beta  \hat q_{i+\frac12,j+1}\left(-\frac{\Delta y}{2}\right )\right] - \left[ \alpha \hat q_{i+\frac12,j}\left(\frac{\Delta y}{2}\right )+ \beta \hat q_{i-\frac12,j+1}\left(-\frac{\Delta y}{2}\right ) \right]\right ) 
 \\\nonumber&+ \frac{1}{\Delta x} \int_{-\frac{\Delta x}{2}}^{\frac{\Delta x}{2}}  U^y \left( \alpha \del_y (q_{ij} + r^y_{ij}) \Big|_{y = \frac{\Delta y}{2}} + \beta \del_y (q_{i,j+1} + r^y_{i,j+1}) \Big|_{y = -\frac{\Delta y}{2}}  \right ) \,   \dd x  = 0 
\end{align}
and
\begin{align}
 \frac{\dd}{\dd t} & \left( \alpha q_{ij}\left(\frac{\Delta x}{2}, \frac{\Delta y}{2} \right) + \beta q_{i+1,j}\left(-\frac{\Delta x}{2}, \frac{\Delta y}{2} \right) + \gamma q_{i,j+1}\left(\frac{\Delta x}{2}, -\frac{\Delta y}{2} \right) + \delta q_{i+1,j+1}\left( -\frac{\Delta x}{2}, -\frac{\Delta y}{2}\right) \right )  \label{eq:intermedevolpoint}
 \\\nonumber & + U^x \left( \alpha \del_x (q_{ij} + r^x_{ij}) \Big |_{x=\frac{\Delta x}{2},y=\frac{\Delta y}{2}} +\beta \del_x (q_{i+1,j} + r^x_{i+1,j}) \Big |_{x=-\frac{\Delta x}{2},y=\frac{\Delta y}{2}}  \right . \\ \nonumber & \qquad \qquad \left. +\gamma \del_x (q_{i,j+1} + r^x_{i,j+1}) \Big |_{x=\frac{\Delta x}{2},y=-\frac{\Delta y}{2}} +\delta \del_x (q_{i+1,j+1} + r^x_{i+1,j+1}) \Big |_{x=-\frac{\Delta x}{2},y=-\frac{\Delta y}{2}} \right )
 \\\nonumber & + U^y \left(  \alpha  \del_y (q_{ij} + r^y_{ij}) \Big |_{x=\frac{\Delta x}{2},y=\frac{\Delta y}{2}} + \beta  \del_y (q_{i+1,j} + r^y_{i+1,j}) \Big |_{x=-\frac{\Delta x}{2},y=\frac{\Delta y}{2}}  \right . \\ \nonumber & \qquad \qquad \left.+ \gamma  \del_y (q_{i,j+1} + r^y_{i,j+1}) \Big |_{x=\frac{\Delta x}{2},y=-\frac{\Delta y}{2}}  + \delta  \del_y (q_{i+1,j+1} + r^y_{i+1,j+1}) \Big |_{x=-\frac{\Delta x}{2},y=-\frac{\Delta y}{2}}  \right ) = 0 
\end{align}
\end{lemma}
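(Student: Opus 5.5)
The plan is to turn the weak statement \eqref{eq:vwupdateintermiedate} into a \emph{pointwise} evolution equation for $q_{ij}$ inside each cell, and then read off all three claims by evaluating or integrating that equation along edges and at corners.

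\textbf{Step 1: membership in $Q^K$.} First I check that both functions under the derivatives in \eqref{eq:vwupdateintermiedate} have derivatives in $Q^K$, as asserted in Remark \ref{rem:afdgspace}.

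Consider $\del_x(q_{ij}+r^x_{ij})$. Clearly $\del_x q_{ij}\in Q^K$. By \eqref{eq:rxdef}, $r^x_{ij}$ is a sum of products $c(y)\,R^{K+1}_{\cdot,\Delta x}(x)$. Each coefficient $c(y)$ is either a trace $q_{ij}(\pm\frac{\Delta x}{2},y)$ or a numerical flux value $\hat q_{i\pm\frac12,j}(y)$. For the linear two-point fluxes considered here, $\hat q$ is a linear combination of traces, so $c\in P^K$ in $y$. Since $\frac{\dd}{\dd x}R^{K+1}\in P^K$, we get $\del_x r^x_{ij}\in Q^K$. The argument for $\del_y(q_{ij}+r^y_{ij})$ is symmetric.

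\textbf{Step 2: pointwise equation.} Since $\frac{\dd}{\dd t}q_{ij}\in Q^K$ as well, \eqref{eq:vwupdateintermiedate} says that
\[
\frac{\dd}{\dd t}q_{ij}+U^x\del_x(q_{ij}+r^x_{ij})+U^y\del_y(q_{ij}+r^y_{ij})\in Q^K
\]
is $L^2$-orthogonal to every product $v(x)w(y)$ with $v,w\in P^K$. These products span $Q^K$, so the expression vanishes identically on the cell. This is the whole analytic content of the proof; everything afterwards is evaluation.

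\textbf{Step 3: corner identity \eqref{eq:intermedevolpoint}.} Evaluate the pointwise identity for cells $(i,j)$, $(i+1,j)$, $(i,j+1)$, $(i+1,j+1)$ at the respective corners adjacent to the node $(i+\frac12,j+\frac12)$. Multiply by $\alpha,\beta,\gamma,\delta$ and add; nothing else is needed.

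\textbf{Step 4: edge identity \eqref{eq:intermedevoledgemomentx}.} Evaluate the pointwise identity at $x=\frac{\Delta x}{2}$ in cell $(i,j)$ and at $x=-\frac{\Delta x}{2}$ in cell $(i+1,j)$. Combine with $\alpha,\beta$ and apply $\frac1{\Delta y}\int\dd y$.

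\begin{itemize}
\item The time-derivative terms and the $U^x$ terms already have the claimed form.
\item In the $U^y$ term, the fundamental theorem of calculus in $y$ gives boundary values of $q_{ij}+r^y_{ij}$ at $y=\pm\frac{\Delta y}{2}$.
\item By the Radau normalizations \eqref{eq:radauproperties} and \eqref{eq:rydef}, these boundary values are $q_{ij}(x,\frac{\Delta y}{2})+(\hat q_{i,j+\frac12}(x)-q_{ij}(x,\frac{\Delta y}{2}))=\hat q_{i,j+\frac12}(x)$, and analogously $\hat q_{i,j-\frac12}(x)$ at the bottom.
\item Taking $x=\pm\frac{\Delta x}{2}$ in the respective cells yields exactly the bracketed $\hat q$ differences.
\end{itemize}

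The identity \eqref{eq:intermedevoledgemomenty} follows by exchanging the roles of $x$ and $y$. Here I would make sure the indices of the $\hat q_{\cdot\pm\frac12,\cdot}$ terms come out consistently, namely the left/right edges of cells $(i,j)$ and $(i,j+1)$ evaluated at $y=\pm\frac{\Delta y}{2}$. I may need to correct apparent index misprints in the stated formula accordingly.

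\textbf{Main obstacle.} I expect the only delicate point to be Step 1: verifying that the $r$-corrections do not leave $Q^K$. This is where the linearity of the flux in the traces and the tensor structure are essential, and it is exactly why the derivatives act on the different functions $q+r^x$ and $q+r^y$ rather than on the AF reconstruction. Once membership in $Q^K$ is secured, the conversion of the weak form into a pointwise identity and the subsequent evaluations are routine.
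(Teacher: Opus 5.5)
Your proof is correct and is essentially the paper's argument. The paper tests \eqref{eq:vwupdateintermiedate} with $w=1$ and the Riesz representers $v^\pm$ of point evaluation at $x=\pm\frac{\Delta x}{2}$, and combines these with the analogous $w^\pm$ for the corner identity; because every term lies in $Q^K$, this is the same as evaluating your pointwise identity, and the $U^y$ (resp.\ $U^x$) term is then handled by the fundamental theorem of calculus and the Radau normalizations exactly as you do. You are also right about the misprint: in the subtracted bracket of \eqref{eq:intermedevoledgemomenty} the first term must be $\alpha\,\hat q_{i-\frac12,j}\left(\frac{\Delta y}{2}\right)$, as the paper's own appendix computation $(q_{ij}+r^x_{ij})\big|_{x=-\frac{\Delta x}{2}}^{x=\frac{\Delta x}{2}}=\hat q_{i+\frac12,j}-\hat q_{i-\frac12,j}$ confirms.
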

The proof can be found in Section \ref{sec:lemflux}.

Even though neither $q_{ij} + r_{ij}^x$, nor $q_{ij} + r_{ij}^y$ agree with the AF approximation $Q_{ij}$, in certain combinations they do agree along certain edges, as the following Lemma shows:

\begin{lemma} \label{lem:edgeequality}
 One has, for any $x \in [-\frac{\Delta x}{2}, \frac{\Delta x}{2}]$, and for $K=1$,
 \begin{align}
  \beta^+ (q_{ij} + r^x_{ij}) \Big |_{y=\frac{\Delta y}{2}} +\beta^- (q_{i,j+1} + r^x_{i,j+1}) \Big |_{y=-\frac{\Delta y}{2}} &= \beta^+ Q_{ij} \Big |_{y=\frac{\Delta y}{2}} + \beta^- Q_{i,j+1}\Big |_{y=\frac{\Delta y}{2}} \label{eq:identityedgex}
 \end{align}
 and analogously, for any $y \in [-\frac{\Delta y}{2}, \frac{\Delta y}{2}]$
 \begin{align}
  \alpha^+  (q_{ij} + r^y_{ij}) \Big |_{x=\frac{\Delta x}{2}} + \alpha^-   (q_{i+1,j} + r^y_{i+1,j}) \Big |_{x=-\frac{\Delta x}{2}} &= \alpha^+ Q_{ij}\Big |_{x=\frac{\Delta x}{2}}  + \alpha^- Q_{i+1,j} \Big |_{x=-\frac{\Delta x}{2}} 
  \end{align}
\end{lemma}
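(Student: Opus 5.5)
The plan is to reduce both identities to the explicit form of the AF approximation in Lemma \ref{lem:recon}, restricted to an edge. I give the argument for \eqref{eq:identityedgex}; the second identity follows by exchanging the roles of $x$ and $y$. The right-hand side is the common trace of the globally continuous AF approximation on the edge $y_{j+\frac12}$. There $Q_{ij}(x,\frac{\Delta y}{2}) = Q_{i,j+1}(x,-\frac{\Delta y}{2})$, and since $\beta^++\beta^-=1$ it equals simply $Q_{ij}(x,\frac{\Delta y}{2})$. I read the second evaluation on the right as the trace at $y=-\frac{\Delta y}{2}$ of cell $(i,j+1)$. So the task is to show that the $\beta$-weighted combination of the two functions $q+r^x$ reproduces this trace.

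First step: restrict the representation of Lemma \ref{lem:recon} to the edge. Evaluate $Q_{ij}$ at $y=\frac{\Delta y}{2}$ and use \eqref{eq:radauproperties}, which gives $R_{\text{L},\Delta y}^{K+1}=0$ and $R_{\text{R},\Delta y}^{K+1}=1$ there. One gets
\begin{align*}
Q_{ij}\Big|_{y=\frac{\Delta y}{2}} = (q_{ij}+r^x_{ij})\Big|_{y=\frac{\Delta y}{2}} + \hat q_{i,j+\frac12}(x) - q_{ij}\left(x,\tfrac{\Delta y}{2}\right) + C^\text{LR}_{ij} R^{K+1}_{\text{L},\Delta x}(x) + C^\text{RR}_{ij}R^{K+1}_{\text{R},\Delta x}(x).
\end{align*}
Similarly, evaluating $Q_{i,j+1}$ at $y=-\frac{\Delta y}{2}$ gives the analogous expression with $\hat q_{i,j+\frac12}(x)-q_{i,j+1}(x,-\frac{\Delta y}{2})$ and the corner coefficients $C^\text{LL}_{i,j+1}, C^\text{RL}_{i,j+1}$. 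Now take the combination with weights $\beta^+,\beta^-$. By the definition of the linear numerical flux, $\beta^+q_{ij}(x,\frac{\Delta y}{2})+\beta^-q_{i,j+1}(x,-\frac{\Delta y}{2})=\hat q_{i,j+\frac12}(x)$, so the $r^y$ contributions cancel exactly. The remaining task is to show that
\begin{align*}
\left(\beta^+C^\text{LR}_{ij}+\beta^-C^\text{LL}_{i,j+1}\right)R^{K+1}_{\text{L},\Delta x} + \left(\beta^+C^\text{RR}_{ij}+\beta^-C^\text{RL}_{i,j+1}\right)R^{K+1}_{\text{R},\Delta x}
\end{align*}
vanishes identically. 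Because the two Radau polynomials are linearly independent, it suffices that both bracketed coefficients vanish.

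Second step, which is the main obstacle: identify the corner constants explicitly. For $K=1$ the orthogonality \eqref{eq:radaudefint} only involves $P^0$. Hence every $R\otimes R$ product has zero cell average and zero edge averages, and the constants $C$ are fixed solely by the nodal interpolation conditions. Evaluate the representation of Lemma \ref{lem:recon} at the corner $(\frac{\Delta x}{2},\frac{\Delta y}{2})$ and use $Q_{ij}=Q_{i+\frac12,j+\frac12}$ there, together with the values of $r^x,r^y$ at that corner. This gives
\begin{align*}
C^\text{RR}_{ij} = Q_{i+\frac12,j+\frac12} - \hat q_{i+\frac12,j}\left(\tfrac{\Delta y}{2}\right) - \hat q_{i,j+\frac12}\left(\tfrac{\Delta x}{2}\right) + q_{ij}\left(\tfrac{\Delta x}{2},\tfrac{\Delta y}{2}\right),
\end{align*}
and analogously for $C^\text{RL}_{i,j+1}$, $C^\text{LR}_{ij}$ and $C^\text{LL}_{i,j+1}$. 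I will cross-check these against the proof in Section \ref{sec:lemrecon}, which may present them differently. I will also confirm that the edge and cell averages indeed do not constrain the $C$'s for $K=1$.

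Third step: insert these expressions. In $\beta^+C^\text{RR}_{ij}+\beta^-C^\text{RL}_{i,j+1}$, the $q$-terms combine to $\hat q_{i,j+\frac12}(\frac{\Delta x}{2})$ and cancel the $-\hat q_{i,j+\frac12}(\frac{\Delta x}{2})$ term. What remains is $Q_{i+\frac12,j+\frac12}-\left[\beta^+\hat q_{i+\frac12,j}(\frac{\Delta y}{2})+\beta^-\hat q_{i+\frac12,j+1}(-\frac{\Delta y}{2})\right]$, which vanishes by \eqref{eq:pointvalid}. The $R_{\text{L},\Delta x}$ coefficient is handled identically at the node $(i-\frac12,j+\frac12)$, again using \eqref{eq:pointvalid}. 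This proves \eqref{eq:identityedgex}. The $y$-analogue uses the $\alpha$-version of \eqref{eq:pointvalid} in the same way.
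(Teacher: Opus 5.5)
Your proof is correct, but it takes a different route from the paper's. The paper never uses Lemma \ref{lem:recon} or the corner constants here. It observes that both sides of \eqref{eq:identityedgex} are quadratics in $x$, so it suffices to match their edge average and their two endpoint values:
\begin{itemize}
\item the left-hand average reduces to $\frac{1}{\Delta x}\int_{-\frac{\Delta x}{2}}^{\frac{\Delta x}{2}} \hat q_{i,j+\frac12}\,\dd x$, because $r^x$ has zero $x$-average;
\item the right-hand average is $\bar Q_{i,j+\frac12}$ by construction of the AF reconstruction;
\item at $x=\pm\frac{\Delta x}{2}$ both sides reduce to $Q_{i\pm\frac12,j+\frac12}$ via \eqref{eq:pointvalid}.
\end{itemize}

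Your substitution of the explicit representation from Lemma \ref{lem:recon} does the same job in a different order. Cancelling the $r^y$ terms shows that the difference of the two sides lies in $\mathrm{span}\{R^{K+1}_{\text{L},\Delta x},R^{K+1}_{\text{R},\Delta x}\}$. For $K=1$ this is exactly the zero-average subspace of $P^2$, so the paper's ``averages agree'' step is absorbed into Lemma \ref{lem:recon}. Your vanishing of the two corner-coefficient combinations is the paper's endpoint check, and both arguments ultimately rest on \eqref{eq:pointvalid}.

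What each route buys:
\begin{itemize}
\item The paper's argument is shorter and self-contained. It needs only unisolvence of (average, two endpoint values) on $P^2$ and the definition of the AF degrees of freedom.
\item Yours depends on Lemma \ref{lem:recon} and on computing the $C$'s. In exchange, it shows exactly where the discrepancy between $q+r^x$ and $Q$ sits (the corner Radau products). It also yields the identities $\beta^+C^\text{RR}_{ij}+\beta^-C^\text{RL}_{i,j+1}=0$ and $\beta^+C^\text{LR}_{ij}+\beta^-C^\text{LL}_{i,j+1}=0$ as a by-product.
\end{itemize}

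Your reading of the right-hand side as the trace of $Q_{i,j+1}$ at $y=-\frac{\Delta y}{2}$ is the one the paper actually uses, both in its proof and in the main theorem. Two small points. First, the $r^y$ cancellation and your third step both tacitly use $\beta^++\beta^-=1$. Second, linear independence of the Radau polynomials is only needed for the converse, not for sufficiency.
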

The proof can be found in Section \ref{sec:lemedgeequality}.

\begin{theorem}
 For $K = 1$, the DG method \eqref{eq:dg2d} is the Active Flux method \eqref{eq:updateedgeaverage}, \eqref{eq:updatenodal}--\eqref{eq:updateaverage} upon identifications \eqref{eq:identification2dnodal}--\eqref{eq:identification2davg}.
\end{theorem}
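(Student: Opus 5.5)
The plan is to verify, one class of degrees of freedom at a time, that the DG evolution induced through the identifications \eqref{eq:identification2dnodal}--\eqref{eq:identification2davg} coincides with the AF update. The map is a bijection since, for $K=1$, both methods carry four degrees of freedom per cell. For the cell average, take $v=w=1$ in \eqref{eq:dg2d}. The volume term then vanishes, and the surface terms are exactly the edge integrals of $\hat q$, i.e. $\bar Q_{i\pm\frac12,j}$ and $\bar Q_{i,j\pm\frac12}$ by \eqref{eq:identification2dedge}. This gives \eqref{eq:updateaverage} directly.

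For the edge averages, apply Lemma \ref{lem:flux}, equation \eqref{eq:intermedevoledgemomentx}, with $\alpha=\alpha^+$ and $\beta=\alpha^-$; the time-derivative term then becomes $\frac{\dd}{\dd t}\bar Q_{i+\frac12,j}$.
\begin{itemize}
\item \emph{$U^y$ term.} By \eqref{eq:pointvalid}, the bracketed combinations of $\hat q_{i,j\pm\frac12}$ are $Q_{i+\frac12,j\pm\frac12}$. This yields the difference quotient in \eqref{eq:updateedgeaverage}.
\item \emph{$U^x$ term.} Here I need
\begin{align*}
\alpha^+\del_x(q_{ij}+r^x_{ij})\big|_{x=\frac{\Delta x}{2}}+\alpha^-\del_x(q_{i+1,j}+r^x_{i+1,j})\big|_{x=-\frac{\Delta x}{2}}
\end{align*}
to agree, after integration in $y$, with the upwind-type combination of $\del_x Q$ at the edge. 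Lemma \ref{lem:edgeequality} only identifies values, not normal derivatives, so I would expand using Lemma \ref{lem:recon}: $Q_{ij}-(q_{ij}+r^x_{ij})=r^y_{ij}+\sum C\,R R$. Each correction term contains a $y$-Radau factor $R^{2}_{\text{L/R},\Delta y}(y)$, which for $K=1$ has zero mean, so it is annihilated by the $y$-average.
\end{itemize}
Hence the edge-averaged $x$-derivative of $q+r^x$ equals that of $Q$. The $x$-edges are handled symmetrically with \eqref{eq:intermedevoledgemomenty}. Note that \eqref{eq:updateedgeaverage} as written is for $U^x>0$; for general fluxes, the resulting update is its $\alpha^\pm$-weighted analogue.

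For the nodal values, use \eqref{eq:intermedevolpoint} with $(\alpha,\beta,\gamma,\delta)=(\alpha^+\beta^+,\alpha^-\beta^+,\alpha^+\beta^-,\alpha^-\beta^-)$, so the time derivative is $\frac{\dd}{\dd t}Q_{i+\frac12,j+\frac12}$. Group the $U^x$ terms in pairs with common $\alpha$-weight: the pair $(\alpha^+\beta^+,\alpha^+\beta^-)$ is $\alpha^+$ times $\beta^+(\cdot)_{ij}+\beta^-(\cdot)_{i,j+1}$ evaluated at the node. Since $\del_x$ is tangential along the horizontal edge, differentiating identity \eqref{eq:identityedgex} of Lemma \ref{lem:edgeequality} in $x$ and evaluating at the endpoint converts these into the $\del_x$ of the $\beta$-weighted AF approximations $Q_{ij},Q_{i,j+1}$. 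By continuity of $Q_h$ across the horizontal edge, the tangential derivatives from above and below coincide. This reduces the $U^x$ terms to $\alpha^+\del_xQ_{ij}+\alpha^-\del_xQ_{i+1,j}$ at the node, and the $U^y$ terms are treated symmetrically with the second identity. This gives \eqref{eq:updatenodal} (and its $\alpha,\beta$-weighted generalization).

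I expect the main obstacle to be the edge-average $U^x$ term: checking carefully that the correction terms from Lemma \ref{lem:recon} have vanishing $y$-mean of their $x$-derivative at the edge. For $r^y$, its coefficients depend on $x$ through $q_{ij}(x,\pm\frac{\Delta y}{2})$, but it still factors a $y$-Radau polynomial, so the argument should hold. I would verify it explicitly for $K=1$, using that $R^2_{\text{L/R}}$ is orthogonal to constants.
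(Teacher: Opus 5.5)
Your proof is correct and follows the paper's argument essentially step by step: Lemma~\ref{lem:flux} with $(\alpha,\beta)=(\alpha^+,\alpha^-)$, \eqref{eq:pointvalid} and the zero $y$-mean of the Radau factors for the edge averages; the weights $\alpha^\pm\beta^\pm$ together with Lemma~\ref{lem:edgeequality} differentiated along the edge for the nodes; and $v=w=1$ for the average, where reading that update directly off \eqref{eq:dg2d} and collapsing the $\beta$-weighted tangential derivatives via continuity of $Q_h$ are harmless simplifications of the paper's steps. The one thing to fix is your aside on bijectivity: equal numbers of degrees of freedom per cell do not give injectivity, which holds cell by cell for the upwind flux but fails for the central flux $\alpha^\pm=\beta^\pm=\frac12$ (the state $q_{ij}=b\frac{2x}{\Delta x}+c\frac{2y}{\Delta y}+d\frac{4xy}{\Delta x\Delta y}$, identical in every cell, is mapped to zero), and the paper's proof does not establish bijectivity either.
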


\begin{proof}
The evolution equation for edge averages is obtained from \eqref{eq:intermedevoledgemomentx} (Lemma \ref{lem:flux}) with $\alpha = \alpha^+$, $\beta = \alpha^-$:
\begin{align}
  \frac{\dd}{\dd t} \bar Q_{i+\frac12,j} &+ \frac{1}{\Delta y} \int_{-\frac{\Delta y}{2}}^{\frac{\Delta y}{2}}  
  U^x \left(\alpha^+ \del_x (q_{ij} + r^x_{ij})\Big |_{x=\frac{\Delta x}{2}} + \alpha^- \del_x (q_{i+1,j} + r^x_{i+1,j})\Big |_{x=-\frac{\Delta x}{2}} \right ) \,  \dd y 
  \\\nonumber&+ \frac{1}{\Delta y}  U^y \left( Q_{i+\frac12,j+\frac12} - Q_{i+\frac12,j-\frac12} \right )
  = 0
 \end{align}
 where 
 \eqref{eq:pointvalid}
 has been used. Moreover, due to the integral property of Radau polynomials and the tensor-product structure,
 \begin{align}
  \int_{-\frac{\Delta y}{2}}^{\frac{\Delta y}{2}} \del_x (q_{ij} + r^x_{ij}) \,\dd y = \int_{-\frac{\Delta y}{2}}^{\frac{\Delta y}{2}} \del_x Q_{ij}  \,\dd y 
 \end{align}
 Then, this equation indeed becomes the natural update \eqref{eq:updateedgeaverage}, with a regularization of the $x$-derivative by means of the same upwinding as employed in the numerical flux of DG. Analogous arguments apply for the perpendicular edge.

 For the point value, consider \eqref{eq:intermedevolpoint} with $\alpha = \alpha^+ \beta^+, \beta = \alpha^- \beta^+ ,\gamma = \alpha^+ \beta^- ,\delta = \alpha^- \beta^-$:
 \begin{align}
 \frac{\dd}{\dd t} &Q_{i+\frac12,j+\frac12}
 \\\nonumber & + U^x \left[ \alpha^+ \del_x \left(  \beta^+(q_{ij} + r^x_{ij}) \Big |_{y=\frac{\Delta y}{2}} + \beta^- (q_{i,j+1} + r^x_{i,j+1}) \Big |_{y=-\frac{\Delta y}{2}} \right )\Big |_{x=\frac{\Delta x}{2}} \right . \\ \nonumber & \qquad \qquad \left.
 + \alpha^-\del_x \left(  \beta^+ (q_{i+1,j} + r^x_{i+1,j}) \Big |_{y=\frac{\Delta y}{2}}   + \beta^- (q_{i+1,j+1} + r^x_{i+1,j+1}) \Big |_{y=-\frac{\Delta y}{2}} \right)\Big |_{x=-\frac{\Delta x}{2}}  \right ]
 \\\nonumber & + U^y \left[  \beta^+ \del_y\left( \alpha^+   (q_{ij} + r^y_{ij}) \Big |_{x=\frac{\Delta x}{2}} + \alpha^-    (q_{i+1,j} + r^y_{i+1,j}) \Big |_{x=-\frac{\Delta x}{2}} \right ) \Big |_{y=\frac{\Delta y}{2}}  \right . \\ \nonumber & \qquad \qquad \left.+  \beta^- \del_y\left( \alpha^+ (q_{i,j+1} + r^y_{i,j+1}) \Big |_{x=\frac{\Delta x}{2}}  + \alpha^-  (q_{i+1,j+1} + r^y_{i+1,j+1}) \Big |_{x=-\frac{\Delta x}{2}} \right )\Big |_{y=-\frac{\Delta y}{2}}  \right ] = 0 
\end{align}
 
With Lemma \ref{lem:edgeequality} this equation can be rewritten as
\begin{align}
 \frac{\dd}{\dd t} &Q_{i+\frac12,j+\frac12}
 \\\nonumber & + U^x \left[ \alpha^+ \del_x \left(  \beta^+Q_{ij} \Big |_{y=\frac{\Delta y}{2}} + \beta^- Q_{i,j+1} \Big |_{y=-\frac{\Delta y}{2}} \right )\Big |_{x=\frac{\Delta x}{2}} \right . \\ \nonumber & \qquad \qquad \left.
 + \alpha^-\del_x \left(  \beta^+ Q_{i+1,j} \Big |_{y=\frac{\Delta y}{2}}   + \beta^- Q_{i+1,j+1} \Big |_{y=-\frac{\Delta y}{2}} \right)\Big |_{x=-\frac{\Delta x}{2}}  \right ]
 \\\nonumber & + U^y \left[  \beta^+ \del_y\left( \alpha^+   Q_{ij} \Big |_{x=\frac{\Delta x}{2}} + \alpha^-    Q_{i+1,j} \Big |_{x=-\frac{\Delta x}{2}} \right ) \Big |_{y=\frac{\Delta y}{2}}  \right . \\ \nonumber & \qquad \qquad \left.+  \beta^- \del_y\left( \alpha^+ Q_{i,j+1} \Big |_{x=\frac{\Delta x}{2}}  + \alpha^-  Q_{i+1,j+1} \Big |_{x=-\frac{\Delta x}{2}} \right )\Big |_{y=-\frac{\Delta y}{2}}  \right ] = 0 
\end{align}
 which indeed is the natural upwind point value update involving a weighted sum of the one-sided derivatives to do multi-dimensional upwinding, and generalizing \eqref{eq:updatenodal}.

The update equation for the cell average follows from \eqref{eq:vwupdateintermiedate} with the choice $v = w = 1$:
\begin{align}
 0 &= \frac{\dd}{\dd t} Q_{ij}^{(0)}  + \frac{1}{\Delta x} \int_{-\frac{\Delta x}{2}}^{\frac{\Delta x}{2}} \frac{1}{\Delta y} \int_{-\frac{\Delta y}{2}}^{\frac{\Delta y}{2}}  \left(U^x \del_x (q_{ij} + r^x_{ij}) + U^y \del_y (q_{ij} + r^y_{ij})\right) \,  \dd y \dd x  \\
 &= \frac{\dd}{\dd t} Q_{ij}^{(0)}  +  U^x \frac{1}{\Delta x} \frac{1}{\Delta y} \int_{-\frac{\Delta y}{2}}^{\frac{\Delta y}{2}} \left(q_{ij}+ r^x_{ij}\right )\Big |_{x = -\frac{\Delta x}{2}}^{\frac{\Delta x}{2}} \,  \dd y  + U^y \frac{1}{\Delta x} \frac{1}{\Delta y}\int_{-\frac{\Delta x}{2}}^{\frac{\Delta x}{2}}  \left(q_{ij} + r^y_{ij} \right) \Big|_{-\frac{\Delta y}{2}}^{\frac{\Delta y}{2}} \,  \dd y 
\end{align}

Here, again the integral properties of the Radau polynomials allow to rewrite this as
\begin{align}
 0 &= \frac{\dd}{\dd t} Q_{ij}^{(0)}  +  U^x \frac{1}{\Delta x} \frac{1}{\Delta y} \int_{-\frac{\Delta y}{2}}^{\frac{\Delta y}{2}} Q_{ij}\Big |_{x = -\frac{\Delta x}{2}}^{\frac{\Delta x}{2}} \,  \dd y  + U^y \frac{1}{\Delta x} \frac{1}{\Delta y}\int_{-\frac{\Delta x}{2}}^{\frac{\Delta x}{2}}  Q_{ij} \Big|_{-\frac{\Delta y}{2}}^{\frac{\Delta y}{2}} \,  \dd y 
\end{align}
which is the usual update \eqref{eq:updateaverage} of the cell average for AF.

\end{proof}

\begin{remark}
 The zeros of $R_{\text{L}, \Delta x}^{K+1}$ are straight lines parallel to the $y$-axis, and those of $R_{\text{L}, \Delta y}^{K+1}$ are parallel the $x$-axis. At the crossing points, i.e. at all points
 \begin{align}
  \{ (x, y) : R_{\text{L}, \Delta x}^{K+1}(x) = 0 \text{ and } R_{\text{L}, \Delta y}^{K+1}(y) = 0 \}
 \end{align}
 one has $q_{ij} = Q_{ij}$, such that the $K+2$ order of accuracy of AF becomes immediately visible as superconvergence of DG.
\end{remark}

\section{Performance Comparison} \label{sec:perfromance}

In this section we investigate the performance of semi-discrete Active Flux methods in two space dimensions of different orders, and compare to that of tensorial modal Discontinuous Galerkin methods. We focus on Active Flux with the minimal possible number of degrees of freedom (point values at the cell interfaces and moments), as proposed in \cite{lechner25} (serendipity-type, called therein $\mathrm{GenAF}(M_{\symtri})$). This is not the new tensorial version for which equivalence for DG has been shown above, but we prefer to compare with this more established version. Both methods are implemented in a unified way, using identical routines for equivalent computational components wherever applicable. For example, both types of methods use moments as degrees of freedom, and thus, the computation of them is done in a similar way. Also, we ensured that, for instance, the computational effort to compute the reconstruction is of the same order of magnitude for both types of methods.

In the following, $N_\mathrm{order}$ denotes the order of the Active Flux or Discontinuous Galerkin methods, $N_{\mathrm{dofs}}$ the exclusive degrees of freedom per cell and $N_{\mathrm{tdofs}}$ the number of degrees of freedom accessible per cell. This distinction is needed, since the point values of the Active Flux methods are shared between neighbouring cells. The number of moments, edge point values and node point values accessible per cell are named $N_{\mathrm{mom}}$, $N_{\mathrm{edge}}$ and $N_{\mathrm{node}}$. 
 
 For the modal Discontinuous Galerkin methods we have the following relation:
 \begin{equation*}
  	N_{\mathrm{tdofs}} = N_{\mathrm{dofs}} = N_{\mathrm{mom}} = (N_{\mathrm{order}})^2
 \end{equation*}
 The number of moments and point values per cell of the Active Flux methods is given by
 \begin{align*}
 	N_{\mathrm{node}} = 4, &&
 	N_{\mathrm{edge}} = 4(N_{\mathrm{order}} -2) &&
 	\text{and}&&
 	N_{\mathrm{mom}}  = \max\left(1 , \frac{1}{2}(N_{\mathrm{order}} - 4)(N_{\mathrm{order}} - 3)\right) 
 \end{align*}
 and the number of (exclusive and accessible) degrees of freedom per cell is given by
 \begin{align*}
 	N_{\mathrm{dofs}} &= \frac{1}{4}N_{\mathrm{node}} + \frac{1}{2}N_{\mathrm{edge}} + N_{\mathrm{mom}} && \text{and} && N_{\mathrm{tdofs}} = N_{\mathrm{node}} + N_{\mathrm{edge}} + N_{\mathrm{mom}}.
 \end{align*}
 To compute the moments, integrals need to be evaluated numerically, which is done by a Gauss-Legendre quadrature, whose order is chosen as the sum of the order of the method ($N_\mathrm{order}$) and the largest exponent of the computed moment. If this value is even, then the next smallest odd number is chosen. As a time discretisation, we use a three-stage, third-order and a five-stage, fourth-order strong stability-preserving Runge-Kutta scheme. In Table \ref{tab:overview} an overview of the methods considered here is given, where the highest order of integration used by a method of a given order is denoted as $N_{\mathrm{int}}$ and the CFL number as $C_{\mathrm{CFL}}$.  In our case, the size of the time step is given by $\Delta t = C_{\mathrm{CFL}}\Delta x = N_\mathrm{order}^{-\frac{1}{2}}$.

\begin{table}[h!]
	\centering
	\begin{tabular}{| m{1cm} | m{0.75cm} | m{0.75cm} |m{0.75cm} |m{0.75cm} |m{0.75cm} |m{0.75cm} |m{0.75cm} |m{0.75cm} |  m{0.75cm} | m{0.75cm} |}
		\hline
		& \multicolumn{5}{c|}{\text{Active Flux Method}}  & \multicolumn{5}{c|}{\text{Discontinuous Galerkin Method}} \\
		\hline
		$N_{\mathrm{order}}$ & 3 & {4} & {5} & 6 & 7 & 2 & 3 & {4} & {5} & 6 \\

		\hline
		\textcolor{black}{$N_{\mathrm{dofs}}$}& 4 & {6} & {8} & 12 & 17  & 4 & 9 & {16} & 25 & 36 \\
		\hline
		$N_{\mathrm{tdofs}}$& 9 & {13} & {17} & 23 & 30  & 4 & 9 & {16} & 25 & 36\\
		\hline 
		$N_{\mathrm{mom}}$ & 1 & {1} & {1} & 3 & 6   & 4 & 9 & {16} & 25 & 36\\
		\hline 
		$N_{\mathrm{edge}}$ & 4 & {8} & {12} & 16 & 20 & - & - & - & - & -\\
		\hline 
		$N_{\mathrm{node}}$ & 4 & {4} & {4} & 4 & 4  & - & - & - & - & -\\
		\hline
		$N_{\mathrm{int}}$ & 3 & {3} & {5} & 7 & 9 & 3 & 5 & {7} & 9 & 11\\
		\hline
		$C_{\mathrm{CFL}}$ & 0.27 & 0.2  & 0.17  & 0.12 & 0.085 & 0.2 & 0.1 & 0.05 & 0.02 & 0.01\\
		\hline
	\end{tabular}
	\caption{Overview of the different Active Flux and Discontinuous Galerkin methods.}
	\label{tab:overview}
\end{table}

\subsection{Theoretical Expectations}
Before we compare the measured results, we will investigate the theoretically expected asymptotic behaviour of the Active Flux and Discontinuous Galerkin methods. First, we consider the runtime, which will be denoted by $\tau$, as a function of the grid and of the method. Using the unified implementation, we expect 
\begin{align}
	\tau = \mathcal{O}( N_{\mathrm{timesteps}} \cdot N_{\mathrm{stages}} \cdot N_{\mathrm{cells}} \cdot \max(N_\mathrm{mom} \cdot N_\mathrm{int} \cdot N_\mathrm{tdofs}, N_{\mathrm{mom}} \cdot (N_{\mathrm{int}})^2 \cdot N_{\mathrm{tdofs}} )),
	\label{eq:runtime_dg}
\end{align}
to be the asymptotic behaviour of the runtime of the Discontinuous Galerkin methods, where $N_{\mathrm{cells}}$ is the number of grid cells, $N_{\mathrm{timesteps}}$ the number of time steps and $N_{\mathrm{stages}}$ the number of stages of the time discretisation. $N_{\mathrm{timesteps}}$, $N_{\mathrm{stages}}$, $N_{\mathrm{tdofs}}$ and $N_{\mathrm{int}}$ depend on the specific method and $N_{\mathrm{timesteps}}$ and $N_{\mathrm{cells}}$ on the grid.
For the Active Flux methods, we expect a slightly different asymptotic behaviour: 
\begin{align}
	\label{eq:runtime_af}
	\tau = \mathcal{O}( N_{\mathrm{timesteps}} \cdot N_{\mathrm{stages}} \cdot N_\mathrm{cells} \cdot  \max &( N_\mathrm{mom} \cdot N_\mathrm{int} \cdot N_\mathrm{tdofs}, \\ \nonumber & \  N_{\mathrm{mom}} \cdot (N_{\mathrm{int}})^2 \cdot N_{\mathrm{tdofs}}, N_{\mathrm{tdofs}}, N_{\mathrm{edge}} \cdot N_{\mathrm{tdofs}})),
\end{align}
where the number of the different degrees of freedom depends on the method.
If we fix the used method (varying $N_\mathrm{cells}$ and thus $N_\mathrm{timesteps}$ in \eqref{eq:runtime_dg} and \eqref{eq:runtime_af} only) and therefore consider $\tau$ only as a function of the grid, we expect for the Active Flux and the Discontinuous Galerkin methods:
\begin{align*}
	\tau =  \mathcal{O}\left((N_{\mathrm{cells}})^{1.5}\right),
\end{align*}
since the number of time steps depends on the chosen spatial step size and $C_{\mathrm{CFL}}$, which is fixed for each method. Another option is to consider the runtime as a function of the numerical error which again depends on the spatial step size. Let $\mathcal{E}_{\mathrm{dofs}}$ be the maximum over all $l^2$-errors of the degrees of freedom of a given method. If we consider $\tau$ to be a function of $\mathcal{E}_{\mathrm{dofs}}$ and assume that $\mathcal{E}_{\mathrm{dofs}}$ is unaffected by the error introduced by the time discretisation and thereby dominated only by the spatial error, we obtain the following expected asymptotic behaviour of the runtime: 
\begin{align*}
	\tau = \mathcal{O}\left( \left(\mathcal{E}_{\mathrm{dofs}}\right)^{-\frac{3}{N_\mathrm{order}}}\right).
\end{align*}

Next, we will consider methods of different orders on a fixed grid and with a fixed time discretisation. We define $\tau_s$ to be the runtime per time step and obtain for the Discontinuous Galerkin methods from \eqref{eq:runtime_dg} and \eqref{eq:runtime_af}
 \begin{align*}
 	\tau_s = \mathcal{O}((N_\mathrm{tdofs})^3) =\mathcal{O}((N_\mathrm{dofs})^3)= \mathcal{O}((N_\mathrm{order})^6),
 \end{align*}
 if we consider the runtime per time step as a function of the number of degrees of freedom or the order of a specific method. For the Active Flux methods we expect a similar asymptotic behaviour: 
\begin{align*}
	\tau_s = \mathcal{O}((N_\mathrm{order})^6).
\end{align*}
If we consider the runtime per time step as a function of the number of degrees of freedom, we get
\begin{align*}
	\tau_s = \mathcal{O}((N_\mathrm{tdofs})^3) && \text{and} && \tau_s = \mathcal{O}((N_\mathrm{dofs})^3).
\end{align*}
Concerning the total runtime $\tau$, we expect for the Discontinuous Galerkin and the Active Flux methods
\begin{align*}
	\tau = \tau_s \cdot \mathcal{O}(N_{\mathrm{timesteps}}),
\end{align*}
where the number of time steps depends solely on $C_{\mathrm{CFL}}$. Hence, the asymptotic relation between the total runtime and the order or the number of degrees of freedom per cell depends on the asymptotic relation between $C_{\mathrm{CFL}}$ and $N_\mathrm{order}$, $N_\mathrm{dofs}$ or $N_\mathrm{tdofs}$ respectively.

\subsection{Numerical Results}

For our performance study, we consider the linear advection equation
\begin{align*}
	\partial_t q + \partial_x q+\partial_yq = 0
\end{align*}
on $[0,1]^2$ with Dirichlet boundary conditions and smooth initial data
\begin{align*}
	q_0(x,y) = 0.8 + \exp \left(-\left(\frac{x-0.5}{0.05}\right)^2-\left(\frac{y-0.5}{0.05}\right)^2\right).
\end{align*}
The spatial discretisation uses Cartesian grids with an equal number of cells in  $x$ and $y$ directions (i.e., $\Delta x = \Delta y$). To obtain the numerical results, we compute the solution to this problem at time $T = 0.1$ with the different Active Flux and Discontinuous Galerkin methods and measure the total runtime, average runtime per time step and the $l^2$-error of the different degrees of freedom. In the following we will refer, for example, to a Discontinuous Galerkin or an Active Flux method of order five with a SSPRK3 time discretisation as DG53 or AF53.
For the first numerical test, we consider the runtime $\tau$ as a function of $N_\mathrm{cells}$. In Figure \ref{fig:runtime_cellnumber} one can see the measured runtime for the different methods with different values of $N_\mathrm{cells}$ (i.e., $20^2$, $40^2$, $80^2$ and $160^2$). As one can see, the Active Flux methods are overall significantly faster than the Discontinuous Galerkin methods with the same order. Note that the curves for AF43 and DG23 and AF44 and DG24 lie on top of each other. In addition, the measured runtime behaves as expected.

\begin{figure}[h!]
	\centering
	\includegraphics[scale = 0.6]{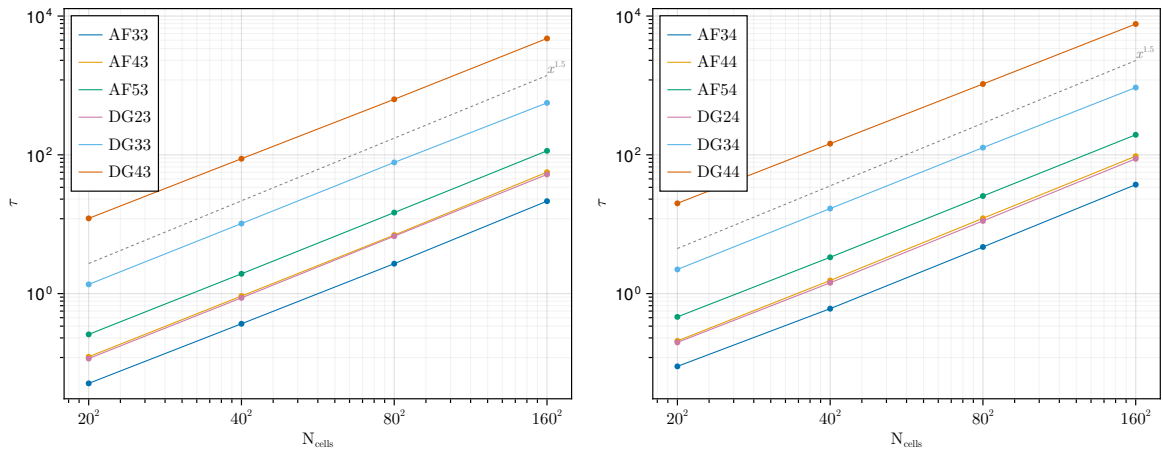}
	\caption{The total runtime of the Active Flux and Discontinuous Galerkin methods of differnt order with respect to the number of cells.}
	\label{fig:runtime_cellnumber}	
\end{figure}

In Figure \ref{fig:runtime_error} one can see the runtime with respect to $\mathcal{E}_\mathrm{dofs}$. For a given error, the Active Flux methods are capable of computing the numerical results within a shorter runtime. The most runtime-efficient methods with respect to the accuracy are AF33, AF43 and AF54. Furthermore, note that the runtime of AF43 and AF53 appears to have a different behaviour than expected. The reason for this is that the error of methods is presumably dominated by the error of the time discretisation. As one can see in Table \ref{tab:eoc1}, the experimental order of convergences of both methods differs from the formal order. By using a fourth-order time discretisation, the fourth and fifth-order Active Flux methods achieve an experimental order of convergence of four and five. This is not needed for the Discontinuous Galerkin method of order four, as the experimental order of convergence for DG43 is already approximately four (see Table \ref{tab:eoc2}). 

\begin{figure}[h!]
	\centering
	\includegraphics[scale = 0.6]{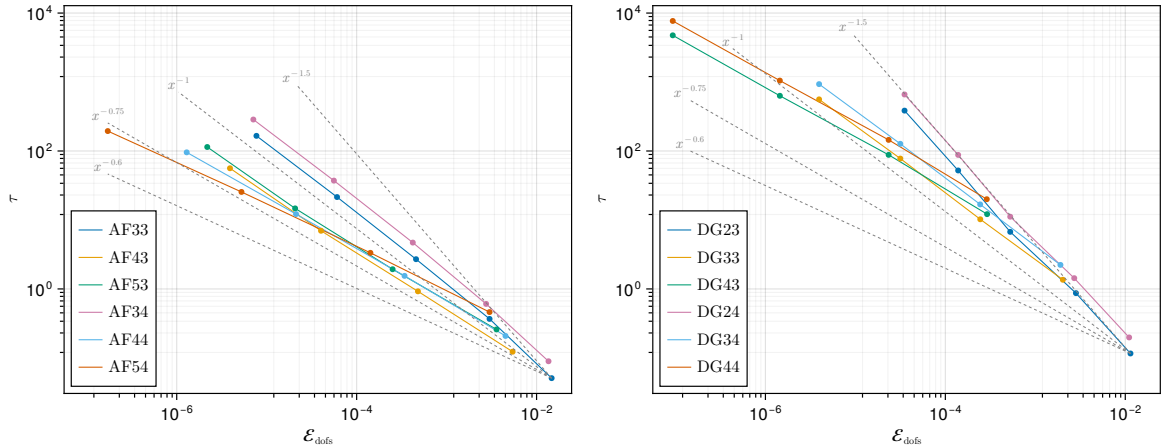}
	\caption{The total runtime of the Active Flux and Discontinuous Galerkin methods of differnt with respect to the error $\mathcal{E}_\mathrm{dofs}$.}
	\label{fig:runtime_error}	
\end{figure}

\begin{table}[h!]
	\centering
	\begin{tabular}{| m{1.5cm} | m{1cm} | m{1cm} |m{1cm} |m{1cm} | m{1cm} |  m{1cm} |}
		\hline
		$\Delta x$ & AF33 & AF43 & AF53 & AF34 & AF44 & AF54 \\
		\hline
		0.05 & - & - & - & - & - & - \\
		\hline
		0.025 & 2.2924 & 3.4956 & 3.8381 & 2.3038 & 3.7278 & 4.3997  \\
		\hline
		0.0125 & 2.7147 & 3.5786 & 3.5996 & 2.7077 & 4.0076 & 4.761  \\
		\hline
		0.0625 & 2.9172 & 3.3618 & 3.2449 & 2.9138 & 4.0369 & 4.9365 \\
		\hline
		0.003125 & 2.9762 & - &  - & 2.9743 & - & - \\
		\hline
	\end{tabular}
	\caption{Experimental order of convergence of the different Active Flux methods.}
	\label{tab:eoc1}
\end{table}

\begin{table}[h!]
	\centering
	\begin{tabular}{| m{1.5cm} | m{1cm} | m{1cm} |m{1cm} |m{1cm} | m{1cm} |  m{1cm} |}
		\hline
		$\Delta x$ & DG23 & DG33 & DG43 & DG24 & DG34 & DG44 \\
		\hline
		0.05 & - & - & - & - & - & -  \\
		\hline
		0.025 & 2.0138 & 3.0501 & 3.6359 & 2.0268 & 2.9636 & 3.6243 \\
		\hline
		0.0125 & 2.4265 & 2.9529 & 4.0162 & 2.364 & 2.9496 & 4.0179 \\
		\hline
		0.0625 & 1.9227 & 2.9984 & 3.9554 & 1.9225 & 2.9983 & 3.9536 \\
		\hline
		0.003125 & 1.98 & - &  - & 1.9801 & - & -  \\
		\hline
	\end{tabular}
	\caption{Experimental order of convergence of the different Discontinuous Galerkin methods.}
	\label{tab:eoc2}
\end{table}

Next, we consider the runtime as a function of the order or the number of exclusive degrees of freedom per cell and therefore all numerical results are computed on a $40 \times 40$ grid with $\Delta x = 0.0125$. In Figure \ref{fig:runtime_order} one can see the measured average runtime per time step $\tau_s$ and the total runtime $\tau$ with respect to the orders of the methods. For a given order, the average runtime per time step of the Active Flux methods considered here is shorter than the average runtime per time step of the Discontinuous Galerkin methods. The difference is even greater for the total runtime, because the Active Flux methods allow for a larger CFL number and therefore have fewer time steps. The steep slope between the Active Flux methods of order five and six is due to the introduction of new moments and the associated numerical computation of the integrals, which dominates the runtime. Up to order five, the Active methods only have the zeroth moment (i.e., cell average) as a degree of freedom, whose computation is significantly cheaper than the computation of higher-order moments. Moreover, the measured results for the average runtime per time step correspond approximately to the theoretical expectations. 

\begin{figure}[h!]
	\centering
	\includegraphics[scale = 0.6]{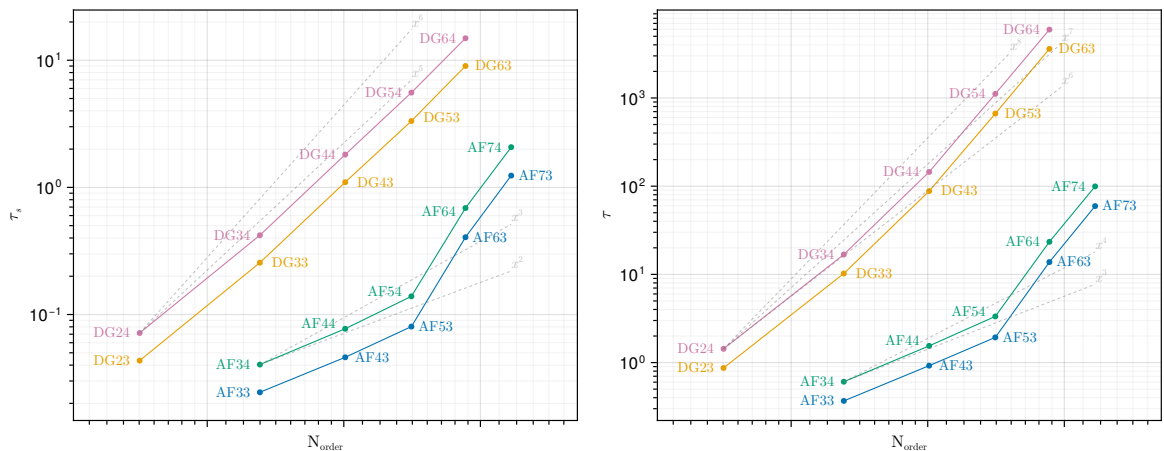}
	\caption{The average runtime per time step and the total runtime with respect to order of the Active Flux and Discontinuous Galerkin methods.}
	\label{fig:runtime_order}	
\end{figure}

In Figure \ref{fig:runtime_dofs}, one can see the runtime with respect to the number of exclusive degrees of freedom per cell, which corresponds to the memory usage. The average runtime per time step of the Active Flux methods up to order five is faster than the average runtime per time step of the Discontinuous Galerkin methods. For order six and higher, the average runtime per time step of the Active Flux methods is at least as fast as the average runtime per time step of the Discontinuous Galerkin methods with the same time discretisation. The total runtime of the different Active Flux methods is faster than the total runtime of the Discontinuous Galerkin methods. The reason for this is again the larger CFL number of the Active Flux methods. In addition, for a given number of degrees of freedom per cell, the Active Flux methods compute more accurate results, as one can see in Figure \ref{fig:error_dofs}. Especially the error for the Active Flux methods with a fourth-order time discretisation is significantly lower than the error for the Discontinuous Galerkin methods with a similar number of exclusive degrees of freedom per cell. In contrast, for a given order, the Discontinuous Galerkin methods are more accurate than the Active Flux methods.

\begin{figure}[h!]
	\centering
	\includegraphics[scale = 0.6]{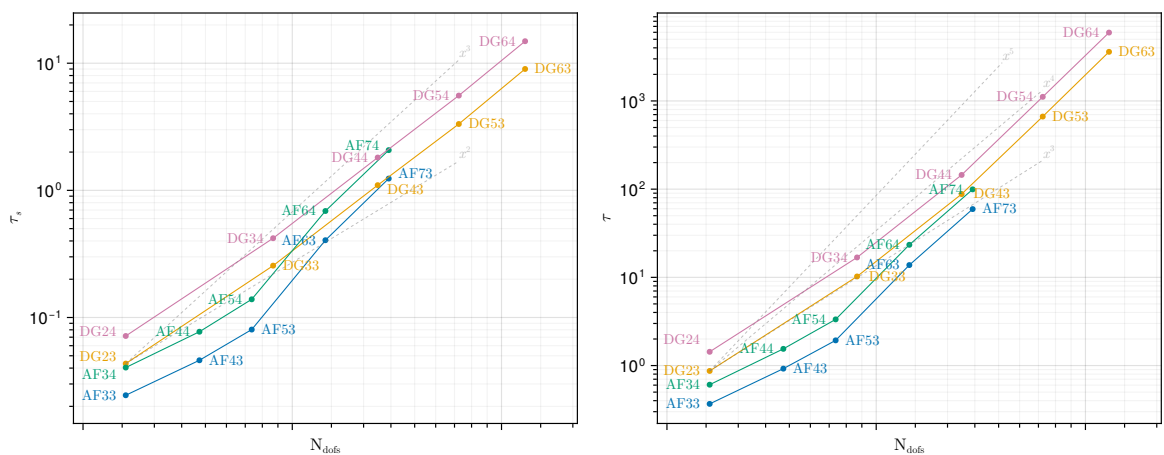}
	\caption{The average runtime per time step and the total runtime with respect to number of exclusive degrees of freedom of the Active Flux and Discontinuous Galerkin methods}
	\label{fig:runtime_dofs}	
\end{figure}

\begin{figure}[h!]
	\centering
	\includegraphics[scale = 0.6]{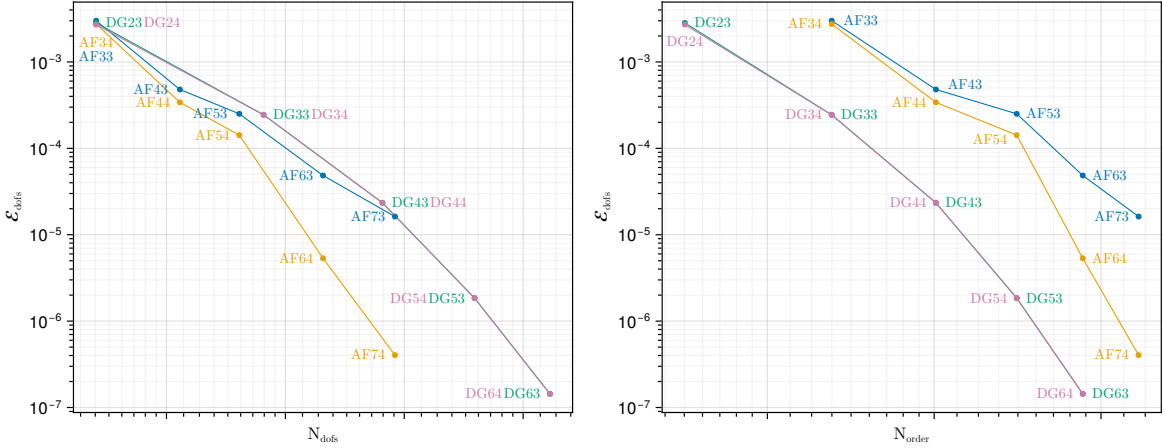}
	\caption{The $l^2$-error with respect to the the number of degrees of freedom and the order of the different Active Flux and Discontinuous Galerkin methods.}
	\label{fig:error_dofs}	
\end{figure}

One way to find the most efficient method with respect to memory usage, accuracy and runtime out of all the methods considered here is to find the one minimising 
\begin{equation*}
	N_\mathrm{dof} \cdot \mathcal{E}_\mathrm{dof} \cdot \tau.
\end{equation*}
In Table \ref{tab:eff} this value for the different methods at different grid sizes is displayed, and the smallest value is highlighted. If one measures efficiency in this way, the Active Flux methods are superior to the Discontinuous Galerkin methods. 

\begin{table}[h!]
	\centering
	\begin{tabular}{| m{1cm} | m{1.5cm} | m{1.5cm} |m{1.5cm} |m{1.5cm} |}
		\hline 
		 & $20 \times 20$ & $40 \times 40$ & $80 \times 80$ & $160 \times 160$ \\
		\hline
		DG23 & 0.0053 & 0.0098 & 0.0142 & 0.029 \\
		\hline
		DG24 & 0.0087 & 0.0155 & 0.0235 & 0.0487 \\
		\hline
		\textbf{AF33} & \textbf{0.003} & 0.0044 & 0.0049 & 0.0052 \\
		\hline
		AF34 & 0.0049 & 0.0067 & 0.0079 & 0.0083 \\
		\hline
		DG33 & 0.0248 & 0.0225 & 0.0221 & 0.0199 \\
		\hline
		DG34 & 0.0382 & 0.0369 & 0.0361 & 0.0332 \\
		\hline
		AF43 & 0.004 & 0.0027 & 0.0017 & 0.0013 \\
		\hline
		AF44 & 0.0056 & 0.0032 & 0.0015 & 0.0007\\
		\hline
		DG43 & 0.0566 & 0.0329 & 0.0146 & 0.0071 \\
		\hline
		DG44 & 0.0924 & 0.0543 & 0.0243 & 0.0115 \\
		\hline
		AF53 & 0.0074 & 0.0039 & 0.0024 & 0.002 \\
		\hline
		\textbf{AF54} & 0.0111 & 0.0038 & \textbf{0.0011} & \textbf{0.0003} \\
		\hline
		DG53 & 0.1924 & 0.0307 & - & - \\
		\hline
		DG54 & 0.3185 & 0.0514 & - & - \\
		\hline
		AF63 & 0.023 & 0.008 & - & - \\
		\hline
		AF64 & 0.0281 & 0.0015 & - & - \\
		\hline
		DG63 & 0.0905 & 0.0187 & - & - \\
		\hline
		DG64 & 0.0216 & 0.0164 & - & - \\
		\hline
		AF73 & 0.1284 & 0.0965 & - & - \\
		\hline
		\textbf{AF74} & 0.0077 & \textbf{0.0007} & - & - \\
		\hline
	\end{tabular}
	\caption{The value $N_\mathrm{dof} \cdot \mathcal{E}_\mathrm{dof} \cdot \tau.
$ for the different methods and different gird sizes.}
	\label{tab:eff}
\end{table}

An overview of the measured results for the different methods is given in Table \ref{tab:overview2}, where the number of degrees of freedom, total runtime and $l^2$-error for different grids are stated with respect to the results for AF54. 

\begin{table}[h!]
	\centering
	\begin{tabular}{| m{0.9cm} | m{0.9cm} | m{1.3cm} |m{1.3cm} |m{1.3cm} | m{1.3cm} |  m{1.3cm} | m{1.3cm} | m{1.3cm} | m{1.3cm} |}
		\hline
		 & & \multicolumn{2}{c|}{$20 \times 20$ } & \multicolumn{2}{c|}{$40 \times 40$} & \multicolumn{2}{c|}{$80 \times 80$} & \multicolumn{2}{c|}{$160 \times 160$} \\
		\hline
		 &   $N_\mathrm{dofs}$ &  $\tau$ &  $\mathcal{E}_\mathrm{dofs}$ &  $\tau$ &  $\mathcal{E}_\mathrm{dofs}$&  $\tau$ & $\mathcal{E}_\mathrm{dofs}$ &  $\tau$ & $\mathcal{E}_\mathrm{dofs}$ \\
		\hline
		DG23 & 0.5 & 0.251 & 3.8065 & 0.2604 & 19.8958 & 0.2638 & 100.3551 & 0.2683 & 810.5553 \\
		\hline
		DG24 & 0.5 & 0.4288 & 3.6782 & 0.4291 & 19.0528 & 0.4376 & 100.353 & 0.451 & 810.6546 \\
		\hline
		AF33 & 0.5 & 0.1101 & 4.9001 & 0.1101 & 21.1132 & 0.1059 & 87.2066 & 0.1105 & 353.5169 \\
		\hline
		AF34 & 0.5 & 0.1935 & 4.5234 & 0.1818 & 19.3376 & 0.1844 & 80.2651 & 0.1913 & 326.1563 \\
		\hline
		DG33 & 1.125 & 2.9412 & 0.6746 & 3.0652 & 1.7192 & 3.0495 & 6.0202 & 2.881 & 23.0699 \\
		\hline
		DG34 & 1.125 & 4.8306 & 0.6339 & 5.0378 & 1.7152 & 4.9861 & 6.0201 & 4.7938 & 23.0698 \\
		\hline
		AF43 & 0.75 & 0.2663 & 1.8042 & 0.2764 & 3.3763 & 0.2735 & 7.6629 & 0.289 & 22.8259 \\
		\hline
		AF44 & 0.75 & 0.4506 & 1.5042 & 0.4634 & 2.3965 & 0.4778 & 4.0401 & 0.4915 & 7.5368  \\
		\hline
		DG43 & 2.0 & 26.2849 & 0.097 & 26.3351 & 0.1647 & 24.7744 & 0.276 & 24.4461 & 0.5448  \\
		\hline
		DG44 & 2.0 & 43.2942 & 0.0962 & 43.4175 & 0.1647 & 41.1555 & 0.2757 & 39.4917 & 0.5449  \\
		\hline
		AF53 & 1.0 & 0.5596 & 1.1959 & 0.5783 & 1.7651 & 0.5776 & 3.9481 & 0.5874 & 12.7529 \\
		\hline
		AF54 & 1.0 & 1.0 & 1.0 & 1.0 & 1.0 & 1.0 & 1.0 & 1.0 & 1.0 \\
		\hline
		DG53 & 3.125 & 207.3767 & 0.0268 & 199.2672 & 0.013 &- &- &- &- \\
		\hline
		DG54 & 3.125 & 343.8791 & 0.0267 & 333.4801 & 0.013 &- &- &- &-  \\
		\hline
		AF63 & 1.5 & 3.9886 & 0.3464 & 4.1296 & 0.3412 &- &- &- &- \\
		\hline
		AF64 & 1.5 & 6.59 & 0.2562 & 7.0084 & 0.0376 &- &- &- &-  \\
		\hline
		DG63 & 4.5 & 1084.433 & 0.0017 & 1079.589 & 0.001 &- &- &- &- \\
		\hline
		DG64 & 4.5 & 1809.13 & 0.0017 & 1784.803 & 0.001 &- &- &- &- \\
		\hline
		AF73 & 2.125 & 17.4682 & 0.0525 & 17.8019 & 0.1142 &- &- &- &-  \\
		\hline
		AF74 & 2.125 & 28.7285 & 0.0114 & 29.7961 & 0.0028 &- &- &- &-  \\
		\hline
	\end{tabular}
	\caption{Overview of the results for the different methods with respect to the results for the AF54 method.}
	\label{tab:overview2}
\end{table}

\section{Conclusions}

In this work we have shown that in quite some generality, Active Flux and DG can be considered to be essentially the same method---with the crucial difference, though, that AF is of order $K+2$, and DG only of order $K+1$, unless the error is measured on certain points or some post-processing applied. One can say that Active Flux is the high-order method ``behind'' DG, that becomes visible e.g. in the Radau points. While a choice of bases (modal/nodal/etc.) in DG trivially leads to equivalent methods, the equivalence between AF and DG is more complicated to see since the spaces employed are different.

While the methods might be the same, different choices of degrees of freedom can entail very different computational cost. We have performed experiments in order to measure runtime, accuracy and memory usage in a unified implementation of both AF and DG, ensuring as much as possible their comparability. In our experiments we find Active Flux to be more efficient than DG. Especially the fifth-order Active Flux method with a fourth-order time discretisation appears to be the most accurate and cost-efficient method in the regime considered.

A study of the relationship between AF and DG for multi-dimensional nonlinear systems is going to be subject of future work.

\section*{Acknowledgement}

We acknowledge inspiring and productive discussions with Lisa Lechner and Junming Duan.

\bibliographystyle{alpha}
\newcommand{\etalchar}[1]{$^{#1}$}

\appendix

\section{Proof of Lemma \ref{lem:flux}} \label{sec:lemflux}

\begin{proof}
The starting point is \eqref{eq:vwupdateintermiedate} with $w = 1$. The defining property of $v^\pm$ is that
\begin{align}
 \frac{1}{\Delta x} \int_{-\frac{\Delta x}{2}}^{\frac{\Delta x}{2}}  v^\pm(x)  q_{ij}(x,y) \,  \dd x = q_{ij}\left(\pm\frac{\Delta x}{2}, y\right)
\end{align}
These test functions exist by Riesz' representation theorem, and they project out the corresponding values of the derivatives, because these are in the same DG space (see Remark \ref{rem:afdgspace}). Summing up \eqref{eq:vwupdateintermiedate} once with $\alpha v^+$, and once with $\beta v^-$, shifted to $i+1$, yields
\begin{align}
  \frac{\dd}{\dd t} \frac{1}{\Delta y} &\int_{-\frac{\Delta y}{2}}^{\frac{\Delta y}{2}} \left(\alpha  q_{ij}\left(\frac{\Delta x}{2},y\right) + \beta  q_{i+1,j}\left(-\frac{\Delta x}{2},y\right)\right ) \,  \dd y  
  \\\nonumber&+ \frac{1}{\Delta y} \int_{-\frac{\Delta y}{2}}^{\frac{\Delta y}{2}}  
  U^x \left(\alpha \del_x (q_{ij} + r^x_{ij})\Big |_{x=\frac{\Delta x}{2}} + \beta \del_x (q_{i+1,j} + r^x_{i+1,j})\Big |_{x=-\frac{\Delta x}{2}} \right ) \, \dd y
  \\\nonumber&+ \frac{1}{\Delta y}  U^y \left(\alpha  (q_{ij} + r^y_{ij})\Big |_{x=\frac{\Delta x}{2},y=-\frac{\Delta y}{2}}^{y=\frac{\Delta y}{2}}  + \beta  (q_{i+1,j} + r^y_{i+1,j})\Big |_{x=-\frac{\Delta x}{2},y=-\frac{\Delta y}{2}}^{y=\frac{\Delta y}{2}}  \right )
  = 0
 \end{align}

Finally, using \eqref{eq:rydef} one finds that
\begin{align}
 \left( q_{ij} + r^y_{ij} \right )\Big |_{x = \pm\frac{\Delta x}{2}, y = -\frac{\Delta y}{2}}^{y = \frac{\Delta y}{2}} &= 
 q_{ij}\left( \pm \frac{\Delta x}{2}, \frac{\Delta y}{2}\right) +  \hat q_{i,j+\frac12}\left(\pm\frac{\Delta x}{2}\right) - q_{ij}\left(\pm\frac{\Delta x}{2}, \frac{\Delta y}{2}\right)   \\\nonumber
 &- q_{ij}\left( \pm\frac{\Delta x}{2}, -\frac{\Delta y}{2}\right) - \hat q_{i,j-\frac12}\left(\pm\frac{\Delta x}{2}\right) + q_{ij}\left(\pm\frac{\Delta x}{2}, -\frac{\Delta y}{2}\right) \\
 &= \hat q_{i,j+\frac12}\left(\pm\frac{\Delta x}{2}\right)   - \hat q_{i,j-\frac12}\left(\pm\frac{\Delta x}{2}\right) 
\end{align}

Similarly, using $v=1$ and $w^\pm$ one finds the analogous expression
\begin{align}
 \frac{\dd}{\dd t} \frac{1}{\Delta x} &\int_{-\frac{\Delta x}{2}}^{\frac{\Delta x}{2}}  \left(\alpha q_{ij}\left(x, \frac{\Delta y}{2}  \right ) +\beta q_{i,j+1}\left( x, -\frac{\Delta y}{2}  \right ) \right )  \,  \dd x  
 \\\nonumber&+ \frac{1}{\Delta x}   U^x \left( \alpha   (q_{ij} + r^x_{ij}) \Big |_{y=\frac{\Delta y}{2},x=-\frac{\Delta x}{2}}^{x=\frac{\Delta x}{2}} + \beta   (q_{i,j+1} + r^x_{i,j+1}) \Big |_{y=-\frac{\Delta y}{2},x=-\frac{\Delta x}{2}}^{x=\frac{\Delta x}{2}} \right ) 
 \\\nonumber&+ \frac{1}{\Delta x} \int_{-\frac{\Delta x}{2}}^{\frac{\Delta x}{2}}  U^y \left( \alpha \del_y (q_{ij} + r^y_{ij}) \Big|_{y = \frac{\Delta y}{2}} + \beta \del_y (q_{i,j+1} + r^y_{i,j+1}) \Big|_{y = -\frac{\Delta y}{2}}  \right ) \,   \dd x  = 0 
\end{align}
which is simplified by realizing that
\begin{align}
  (q_{ij} + r^x_{ij}) \Big |_{y=\pm\frac{\Delta y}{2},x=-\frac{\Delta x}{2}}^{x=\frac{\Delta x}{2}} = 
  \hat q_{i+\frac12,j}\left(\pm\frac{\Delta y}{2}\right ) - \hat q_{i-\frac12,j}\left(\pm\frac{\Delta y}{2}\right )
\end{align}
For the third expression, one combines the two approaches and obtains immediately
\begin{align}
 \frac{\dd}{\dd t} & \left( \alpha q_{ij}\left(\frac{\Delta x}{2}, \frac{\Delta y}{2} \right) + \beta q_{i+1,j}\left(-\frac{\Delta x}{2}, \frac{\Delta y}{2} \right) + \gamma q_{i,j+1}\left(\frac{\Delta x}{2}, -\frac{\Delta y}{2} \right) + \delta q_{i+1,j+1}\left( -\frac{\Delta x}{2}, -\frac{\Delta y}{2}\right) \right )  
 \\\nonumber & + U^x \left( \alpha \del_x (q_{ij} + r^x_{ij}) \Big |_{x=\frac{\Delta x}{2},y=\frac{\Delta y}{2}} +\beta \del_x (q_{i+1,j} + r^x_{i+1,j}) \Big |_{x=-\frac{\Delta x}{2},y=\frac{\Delta y}{2}}  \right . \\ \nonumber & \qquad \qquad \left. +\gamma \del_x (q_{i,j+1} + r^x_{i,j+1}) \Big |_{x=\frac{\Delta x}{2},y=-\frac{\Delta y}{2}} +\delta \del_x (q_{i+1,j+1} + r^x_{i+1,j+1}) \Big |_{x=-\frac{\Delta x}{2},y=-\frac{\Delta y}{2}} \right )
 \\\nonumber & + U^y \left(  \alpha  \del_y (q_{ij} + r^y_{ij}) \Big |_{x=\frac{\Delta x}{2},y=\frac{\Delta y}{2}} + \beta  \del_y (q_{i+1,j} + r^y_{i+1,j}) \Big |_{x=-\frac{\Delta x}{2},y=\frac{\Delta y}{2}}  \right . \\ \nonumber & \qquad \qquad \left.+ \gamma  \del_y (q_{i,j+1} + r^y_{i,j+1}) \Big |_{x=\frac{\Delta x}{2},y=-\frac{\Delta y}{2}}  + \delta  \del_y (q_{i+1,j+1} + r^y_{i+1,j+1}) \Big |_{x=-\frac{\Delta x}{2},y=-\frac{\Delta y}{2}}  \right ) = 0 
\end{align}
\end{proof}

\section{Proof of Lemma \ref{lem:recon}} \label{sec:lemrecon}

\begin{proof}
By the fact that Radau polynomials have zero average it is clear that 
\begin{align}
 \frac{1}{\Delta x} \int_{-\frac{\Delta x}{2}}^{\frac{\Delta x}{2}}\frac{1}{\Delta y} \int_{-\frac{\Delta y}{2}}^{\frac{\Delta y}{2}} Q_{ij}(x, y) \, \dd y \dd x = \frac{1}{\Delta x} \int_{-\frac{\Delta x}{2}}^{\frac{\Delta x}{2}}\frac{1}{\Delta y} \int_{-\frac{\Delta y}{2}}^{\frac{\Delta y}{2}} q_{ij}(x, y) \, \dd y \dd x
\end{align}
and also
\begin{align}
 \frac{1}{\Delta y} &\int_{-\frac{\Delta y}{2}}^{\frac{\Delta y}{2}} Q_{ij}\left( \frac{\Delta x}{2}, y\right ) \, \dd y = \frac{1}{\Delta y} \int_{-\frac{\Delta y}{2}}^{\frac{\Delta y}{2}} q_{ij}\left( \frac{\Delta x}{2}, y\right ) + r^x_{ij}\left( \frac{\Delta x}{2}, y\right )  \, \dd y \\
 &= \frac{1}{\Delta y} \int_{-\frac{\Delta y}{2}}^{\frac{\Delta y}{2}} q_{ij}\left( \frac{\Delta x}{2}, y\right )  + \left( \hat q_{i+\frac12,j} - q_{ij}\left(\frac{\Delta x}{2}, y\right) \right ) R_{\text{R},\Delta x}^{K+1}\left(\frac{\Delta x}{2}\right)  \, \dd y \\
 &= \frac{1}{\Delta y} \int_{-\frac{\Delta y}{2}}^{\frac{\Delta y}{2}} \hat q_{i+\frac12,j}\,\dd y
\end{align}
The other edge-averages are treated analogously.
Finally, 
\begin{align}
 Q_{ij}\left( \frac{\Delta x}{2}, \frac{\Delta y}{2} \right ) &=  q_{ij}\left( \frac{\Delta x}{2}, \frac{\Delta y}{2} \right ) + r^x_{ij} \Big|_{x=\frac{\Delta x}{2}, y=\frac{\Delta y}{2} }+ r^y_{ij} \Big|_{x=\frac{\Delta x}{2}, y=\frac{\Delta y}{2} } + C_{ij}^\text{RR} &\overset{!}{=} Q_{i+\frac12,j+\frac12}
\end{align}
which can easily be solved for $C_{ij}^\text{RR}$:
\begin{align}
 C_{ij}^\text{RR} = Q_{i+\frac12,j+\frac12}  -  \hat q_{i+\frac12,j}\left(\frac{\Delta y}{2}\right)  - \hat q_{i,j+\frac12}\left(\frac{\Delta x}{2}\right) + q_{ij}\left(\frac{\Delta x}{2},\frac{\Delta y}{2}\right)\\
\end{align}
Analogously, the other three nodal values allow to fix the remaining constants $C_{ij}^\text{LL},C_{ij}^\text{LR},C_{ij}^\text{RL}$. %, e.g.
\end{proof}

\section{Proof of Lemma \ref{lem:edgeequality}} \label{sec:lemedgeequality}

\begin{proof}
 The expressions in question are second-degree polynomials. It is therefore sufficient to verify that they agree in average and in value at the endpoints. 
 
 Consider first the average of the left-hand side of \eqref{eq:identityedgex}
 \begin{align}
  \frac{1}{\Delta x} \int_{-\frac{\Delta x}{2}}^{\frac{\Delta x}{2}} \beta^+ (q_{ij} + r^x_{ij}) \Big |_{y=\frac{\Delta y}{2}} +\beta^- (q_{i,j+1} + r^x_{i,j+1}) \Big |_{y=-\frac{\Delta y}{2}} \, \dd x &= \frac{1}{\Delta x} \int_{-\frac{\Delta x}{2}}^{\frac{\Delta x}{2}} \hat q_{i,j+\frac12} \, \dd x 
 \end{align}
 while the average of the right-hand side is, by continuity of the AF approximation
 \begin{align}
  \frac{1}{\Delta x} \int_{-\frac{\Delta x}{2}}^{\frac{\Delta x}{2}}  \beta^+ Q_{ij} \Big |_{y=\frac{\Delta y}{2}} + \beta^- Q_{i,j+1}\Big |_{y=\frac{\Delta y}{2}}  \, \dd x = (\beta^+ + \beta^-) \bar Q_{i,j+\frac12}
 \end{align}
 which, with $\beta^+ + \beta^- = 1$ and \eqref{eq:identification2dedge} is indeed the same.

 Observe now that the evaluation at $x = \pm \frac{\Delta x}{2}$ allows to simplify the expressions, using \eqref{eq:rxdef}--\eqref{eq:rydef} as follows:
 \begin{align}
  \left[ \beta^+ (q_{ij} + r^x_{ij}) \Big |_{y=\frac{\Delta y}{2}} +\beta^- (q_{i,j+1} + r^x_{i,j+1}) \Big |_{y=-\frac{\Delta y}{2}} \right ]\Big|_{x = \pm \frac{\Delta x}{2}} &= \beta^+  \hat q_{i\pm\frac12,j} + \beta^- \hat q_{i\pm\frac12,j+1}  = Q_{i\pm\frac12,j+\frac12}    \\      
  \left[ \alpha^+  (q_{ij} + r^y_{ij}) \Big |_{x=\frac{\Delta x}{2}} + \alpha^-   (q_{i+1,j} + r^y_{i+1,j}) \Big |_{x=-\frac{\Delta x}{2}} \right] \Big|_{y = \pm \frac{\Delta y}{2}} &= \alpha^+ \hat q_{i,j\pm\frac12} + \alpha^- \hat q_{i+1,j\pm\frac12} = Q_{i+\frac12,j\pm\frac12}
  \end{align}
  where the last equalities are \eqref{eq:pointvalid}.
  The rest of the proof follows analogously.
\end{proof}
\end{document}